\RequirePackage{fix-cm}
\documentclass[smallextended,envcountsect]{svjour3} 
\smartqed  

\makeatletter
\let\c@corollary\c@theorem
\let\c@definition\c@theorem
\let\c@lemma\c@theorem
\let\c@proposition\c@theorem
\let\c@remark\c@theorem
\makeatother

\newlength{\geomfigheight}

\usepackage[utf8]{inputenc}
\usepackage[T1]{fontenc}

\usepackage{microtype}
\usepackage{graphicx}
\usepackage{subcaption}
\usepackage{booktabs} 
\usepackage{array}
\usepackage{tabularx}
\usepackage{makecell}
\newcolumntype{Y}{>{\raggedright\arraybackslash}X}
\usepackage{tikz}
\usetikzlibrary{shapes.geometric,positioning,calc}

\usepackage[hypertexnames=false]{hyperref}
\usepackage{xurl}
\usepackage[numbers,sort&compress]{natbib}

\usepackage{amsmath}
\usepackage{amssymb}
\usepackage{mathtools}

\usepackage[capitalize,noabbrev]{cleveref}

\usepackage[textsize=tiny]{todonotes}

\title{Theoretical Challenges in Learning for Branch-and-Cut}
\journalname{}
\date{}
\makeatletter\def\makeheadbox{}\makeatother

\author{Hongyu Cheng \and Amitabh Basu}

\institute{Department of Applied Mathematics \& Statistics, Johns Hopkins University, Baltimore, MD 21218, USA\\
\email{hongyucheng@jhu.edu}\\
\email{basu.amitabh@jhu.edu}}

\let\ACCa\a
\let\ACCb\b
\let\ACCc\c
\let\ACCd\d
\let\ACCu\u
\let\ACCv\v
\let\ACCH\H
\let\ACCL\L
\let\ACCO\O
\let\ACCP\P
\let\ACCi\i

\newcommand{\N}{\mathbb{N}}

\newcommand{\set}[1]{\left\{#1\right\}} 
 
\newcommand{\R}{\mathbb{R}}

\newcommand{\Z}{\mathbb{Z}}
\newcommand{\x}{\mathbf{x}}
\newcommand{\y}{\mathbf{y}}
\newcommand{\w}{\mathbf{w}}

\newcommand{\e}{\mathbf{e}}

\renewcommand{\u}{\mathbf{u}}
\renewcommand{\v}{\mathbf{v}}
\renewcommand{\a}{\mathbf{a}}
\renewcommand{\b}{\mathbf{b}}
\renewcommand{\c}{\mathbf{c}}
\renewcommand{\d}{\mathbf{d}}

\newcommand{\U}{\mathcal{U}}

\newcommand{\Acal}{\mathcal{A}}

\renewcommand{\H}{\mathcal{H}}
\renewcommand{\L}{\mathcal{L}}
\renewcommand{\O}{\mathcal{O}}

\newcommand{\Tcal}{\mathcal{T}}

\renewcommand{\P}{\mathcal{P}}
\newcommand{\Ccal}{\mathcal{C}}

\newcommand{\0}{\mathbf{0}}

\DeclareMathOperator{\argmax}{arg\,max} 

\DeclareMathOperator{\conv}{conv} 
\DeclareMathOperator{\Score}{Score} 

\newcommand{\LP}{\mathrm{LP}}
\newcommand{\IP}{\mathrm{IP}}

\usepackage{bm}
\usepackage{bbm} 
\newcommand{\1}{\mathbf{1}} 

\usepackage{etoolbox}
\AtBeginEnvironment{thebibliography}{
\let\a\ACCa
\let\b\ACCb
\let\c\ACCc
\let\d\ACCd
\let\u\ACCu
\let\v\ACCv
\let\H\ACCH
\let\L\ACCL
\let\O\ACCO
\let\P\ACCP
\let\i\ACCi
}

\begin{document}

\maketitle

\begin{abstract}
	Machine learning is increasingly used to guide branch-and-cut (B\&C) for mixed-integer linear programming by learning score-based policies for selecting branching variables and cutting planes. Many approaches train on local signals from lookahead heuristics such as strong branching, and linear programming (LP) bound improvement for cut selection. Training and evaluation of the learned models often focus on local score accuracy. We show that such local score-based methods can lead to search trees exponentially larger than optimal tree sizes, by identifying two sources of this gap. The first is that these widely used expert signals can be misaligned with overall tree size. LP bound improvement can select a root cut set that yields an exponentially larger strong branching tree than selecting cuts by a simple proxy score, and strong branching itself can be exponentially suboptimal \citep{dey2024theoretical}. The second is that small discrepancies can be amplified by the branch-and-bound recursion. An arbitrarily small perturbation of the right-hand sides in a root cut set can change the minimum tree size from a single node to exponentially many. For branching, arbitrarily small score discrepancies, and differences only in tie-breaking, can produce trees of exponentially different sizes, and even a small number of decision differences along a trajectory can incur exponential growth. These results show that branch-and-cut policies trained and learned using local expert scores do not guarantee small trees, thus motivating the study of data-driven methods that produce policies better aligned with tree size rather than only accuracy on expert scores.
\keywords{Mixed-integer linear programming \and Branch-and-cut \and Branch-and-bound \and Cutting planes \and Machine learning}
\end{abstract}

\section{Introduction}\label{sec:intro}

Modern solvers for mixed-integer linear programs (MILPs) rely on branch-and-cut (B\&C), which combines branching on integer variables with the addition of cutting planes~\cite{land2009automatic,nemhauser1988integer,conforti2014integer}.
A B\&C run is driven by a long sequence of decisions, such as which cuts to add, which variable to branch on, and which node to process, while performance is measured by a global criterion, namely the size of the resulting search tree. The problem of making these decisions, a.k.a. {\em selecting branch-and-cut policies}, is challenging because the overall tree size is a complicated function of these local decisions made at individual nodes of the tree. In principle, one could solve this using dynamic programming, but the enormous size of the state space renders such approaches impractical. Instead, state of the art methods employ policies based on local parameters, for example LP lookahead for cut selection or strong branching scores for branching, meaning that the decision that maximizes such a local parameter is selected at every stage of the branch-and-cut procedure. However, calculating even these local parameters can be computationally very intensive.

\subsection{Machine learning for branch-and-cut}
This has motivated a growing literature that uses machine learning to guide B\&C decisions, with the goal of reducing tree size and solve time. See~\citep{bengio2021machine,scavuzzo2024machine} for surveys and \Cref{sec:related_work} for a broader discussion.
Although learned policies often outperform default heuristics on targeted distributions, it remains unclear when their local training objectives translate into improved global B\&C performance.
Most learning pipelines train on \emph{local supervision}, either by regressing an expert score, learning a ranking over candidate actions, or matching an expert decision via classification.
Performance, however, is measured by \emph{global} outcomes, such as the size of the B\&C tree or end-to-end solve time.
This mismatch leads to a basic question:

\emph{If a learned policy matches an expert's local scores (or decisions) to high accuracy, does it necessarily obtain similar global B\&C performance?}

There are at least two reasons to be skeptical.
First, branch-and-bound is a sequential decision process, so imitation learning faces a distribution mismatch as a general learning paradigm: training examples come from the node distribution induced by the expert, but at test time the learned policy induces its own distribution. Even a small imitation error rate under the expert distribution can compound along a long trajectory and, in the worst case, yield a quadratic dependence of the cumulative loss on the effective horizon~\citep{ross2011reduction}.
Second, at a fixed node, score-based rules select an $\argmax$ over candidates. Thus, even when score error margins are small or ties occur, tiny score perturbations or different tie-breaking can flip the selected action, and small changes in scoring parameters can lead to very different tree sizes~\citep{balcan2018learning}.
While these are standard imitation learning concerns, our results identify failure modes that persist even in the infinite data limit and under very strong forms of local agreement.

This paper answers the above question in the negative: local imitation accuracy does not control global tree size.
In contrast to generalization analyses, our focus is whether local imitation objectives are stable surrogates for global tree size.
We isolate two obstacles.
The first is \emph{expert suboptimality}: the expert signal used for supervision can itself be far from optimal, so faithfully imitating it can hurt.
The second is \emph{perturbation instability}: even when the intended signal is sensible, tiny perturbations in scores or cut definitions can be amplified by the recursion, leading to exponentially different trees.
Our results are worst case and do not contradict empirical success on structured instance distributions; their purpose is to clarify the pitfalls in local supervision alone, and to motivate training and evaluation procedures that account for stability.

\subsection{Our contributions}
We establish four separations that demonstrate these failures for cut selection and branching.
\Cref{tab:results_summary} summarizes our results.

\begin{table}[t]
	\caption{Summary of main results. Each entry illustrates a failure mode of local supervision for B\&C decisions.}
	\label{tab:results_summary}
	\vskip 0.15in
	\centering
	{\small
		\setlength{\tabcolsep}{4pt}
		\renewcommand{\arraystretch}{1.3}
		\begin{tabularx}{\columnwidth}{@{}l>{\raggedright\arraybackslash}X>{\raggedright\arraybackslash}X@{}}
			\toprule
			& \textbf{Expert suboptimality} & \textbf{Perturbation instability} \\
				\midrule
				\makecell[tl]{\textbf{Cut}\\\textbf{selection}}
				& \cref{thm:global_cut_eff_vs_improve}: LP improvement based selection $\Rightarrow 2^{\Omega(n)}$ larger tree vs.\ selecting cuts by efficacy.
				& \cref{thm:cut_stability_gap}: $\varepsilon$ RHS change in a root cut set $\Rightarrow$ tree size $1$ vs.\ $2^{\Omega(n)}$.\\
			\addlinespace[2pt]
				\makecell[tl]{\textbf{Branching}\\\textbf{variable}\\\textbf{selection}}
				& \citet{dey2024theoretical}: SB itself can incur $2^{\Omega(n)}$ blowup vs.\ optimal.
				& \cref{thm:exp_gap_imitation}: arbitrarily small score differences $\Rightarrow 2^{\Omega(n)}$ blowup.\newline
				\cref{thm:k_deviation_linear_gap}: $k$ deviations from 
                strong branching
                $\Rightarrow 2^{\Omega(k)}$ blowup. \\
				\bottomrule
			\end{tabularx}}
	\vskip -0.1in
\end{table}

For \emph{expert suboptimality}, we show that LP bound improvement, a natural lookahead signal for cut selection, can select a root cut set that yields an exponentially (in the number of decision variables) larger  strong branching tree than selecting cuts by a simple proxy such as {\em efficacy} (\cref{thm:global_cut_eff_vs_improve}).
For branching, prior work~\citep{dey2024theoretical} shows that strong branching can be exponentially suboptimal compared to an optimal tree.

For \emph{perturbation instability}, we prove that an arbitrarily small perturbation of the right-hand sides in a root cut set can change the optimal B\&B tree size (over all node selection and branching rules) from $1$ to $2^{\Omega(n)}$, where $n$ is the number of decision variables, while changing the root LP improvement by at most $\varepsilon$ and closing nearly all of the integrality gap (\cref{thm:cut_stability_gap}).
For branching, we construct instances where, for any $\varepsilon>0$, a policy can match strong branching scores within $\varepsilon$ on all candidates at every node in either tree, yet strong branching yields at most $2n+1$ nodes and the policy yields at least $2^{n+1}-1$ (\cref{thm:exp_gap_imitation}), and where identical scores with different tie-breaking can also yield exponential gaps (\cref{prop:tiebreak_instability}). We also show that $k$ deviations from strong branching can inflate tree size by $2^{\Omega(k)}$ (\cref{thm:k_deviation_linear_gap}).
These constructions are most directly relevant to methods that train policies by imitating local expert signals such as strong branching scores or LP lookahead.
They show that such imitation, while a natural and computationally attractive approach, does not guarantee global performance similar to the expert that was imitated.
This gap motivates end-to-end approaches that directly optimize tree size (in a data-driven manner).
For imitation learning pipelines, losses that account for score margins and stress tests that perturb scores or flip decisions along the trajectory can help detect fragility.

\paragraph{Organization.}
We conclude the introduction with a discussion of related work (\Cref{sec:related_work}).
The remainder of the paper is organized as follows.
\Cref{sec:prelim} formalizes the local score viewpoint and the tree size metrics we use.
\Cref{sec:cutting,sec:branching} present the separations for cut selection and branching, respectively.
Proofs of the main results appear in \Cref{sec:proofs}.
We conclude in \cref{sec:conclusion} with implications for training and evaluation.

\subsection{Related work}\label{sec:related_work}

\paragraph{Learning score-based branch-and-cut policies.}
Machine learning methods for branch-and-cut often learn a policy for selecting branching variables or cutting planes from features of the current relaxation.
A common design is to train a predictor of an expensive expert signal and then apply the same $\argmax$ rule at test time.
For branching, strong branching is a standard supervision target, and many approaches aim to approximate it using learned models or parameterized scoring functions \citep{Khalil2016,Alvarez2017,gasse2019exact,Gupta2020,zarpellon2021parameterizing}.
For cut selection and cut management, learning objectives often use lookahead measures such as LP bound improvement or solver feedback to score candidate cuts \citep{Paulus2022,huang2022learning,puigdemont2024learning,wang2023learning,tang2020reinforcement}.
Surveys \citep{bengio2021machine,scavuzzo2024machine} provide broader overviews of learning within mixed-integer optimization and branch-and-bound.
A complementary line of work provides generalization guarantees for data driven algorithm design and for learning components of branch-and-bound and branch-and-cut \citep{gupta2016pac,balcan2020data,balcan2021much,balcan2018learning,balcan2021sample,balcan2021improved,balcan2022structural,Cheng2024Learning,Cheng2024Sample,Cheng2025Generalization}.
These results bound estimation error from finite samples, whereas our focus is on approximation and stability: whether small local imitation error or small perturbations can still lead to large changes in tree size.

\paragraph{Learning beyond local score imitation.}
Several approaches optimize objectives that are closer to end-to-end search effort than local score regression.
One line models variable selection as a sequential decision problem and trains policies using reinforcement learning or MDP formulations, with rewards tied to downstream node counts or solve time \citep{etheve2020reinforcement,scavuzzo2022learning,parsonson2023reinforcement,strang2025mdp}.
These methods avoid strong branching scores as direct supervision, but they still face credit assignment and distribution shift along the induced search trajectory.
Related work also treats branch-and-bound as a search process and trains policies from rollouts so that the learned rule is optimized for its induced trajectory rather than one step imitation at fixed nodes \citep{he2014learningtosearch}.
A complementary line learns primal guidance, for example diving policies, local branching, or learned large neighborhood search, to obtain good incumbents early and strengthen pruning throughout the tree \citep{paulus2023dive,fischetti2003local,danna2005rins,liu2021revisiting,addanki2020neural,sonnerat2021learning,song2020general}.

\paragraph{Tree size and sensitivity.}
The dependence of branch-and-cut performance on local choices has also been studied without learning, through lower bounds on tree size and analyses of how cutting planes interact with branching.
Classical constructions show that branch-and-bound can require exponential trees even for simple integer programs \citep{jeroslow1974trivial}, and more recent work develops general lower bounds for branch-and-bound trees \citep{dey2023lowerbounds}.
The papers \citep{basu2020complexity,basu-BB-CP-II} study the complexity of cutting plane and branch-and-bound algorithms for mixed-integer optimization, including separations between the strength of cutting planes, branching, and their combination.
From the viewpoint of cut selection, Dey and Molinaro study obstacles to selecting cutting planes using only local score information \citep{deyMolinaro2018cuttingPlaneSelection}.
Shah et al.\ show that strengthening the relaxation can increase tree size under a fixed branching rule, formalizing a non-monotonicity phenomenon that is relevant to cut selection \citep{shah2025non}.
Finally, numerical issues and small perturbations in cut coefficients or right-hand sides are known to affect cut generation and the resulting search trajectory, motivating safe cut generation and stabilization techniques \citep{cook2009numerically,cornuejols2013safety,achterberg2009scip}.
Our separations are consistent with these phenomena, and they isolate two mechanisms that break the link between local supervision and global tree size.

\section{Preliminaries}\label{sec:prelim}

This section formalizes the local score viewpoint that motivates our lower bounds.
We introduce notation for (i) score-based decision rules, (ii) two standard expert signals used for supervision, and (iii) the tree size metrics used in our statements.

\subsection{Branch-and-Bound Trees}

We consider mixed-integer linear programs (MILPs) of the form
\begin{align*}
  \max        & \quad \c^\top \x                                                     \\
  \text{s.t.} & \quad A\x \leq \b,\ \x \in \Z^{n_1} \times \R^{n_2},
\end{align*}
where $A \in \R^{m \times (n_1+n_2)}$, $\b \in \R^m$, and $\c \in \R^{n_1+n_2}$, and we write $I=(A,\b,\c,n_1)$ for the resulting instance.
Let $P=\{\x \in \R^{n_1+n_2}: A\x \leq \b\}$ denote the LP relaxation feasible region. We say the problem is a {\em 0-1 MILP instance} if the integer variables are restricted to take only $0$ or $1$ values.
The mixed-integer feasible set is $P_I= P \cap (\mathbb{Z}^{n_1} \times \mathbb{R}^{n_2})$. \Cref{fig:milp_geometry} illustrates the geometric relationship between $P$, $P_I$, and their optima.
A branch-and-cut (B\&C) algorithm maintains a search tree whose nodes correspond to subproblems obtained from $P$ (also called the root node) by adding branching constraints and (optionally) cutting planes.
At a node $N$, we write $z(N)$ for the optimal value of the node LP relaxation.
The algorithm prunes nodes that are infeasible, integral, or whose LP value is at most a known incumbent value.

\begin{figure}[t]
	\centering
	\setlength{\geomfigheight}{0.30\linewidth}
	\begin{subfigure}[t]{0.31\linewidth}
	\centering
	\includegraphics[height=\geomfigheight]{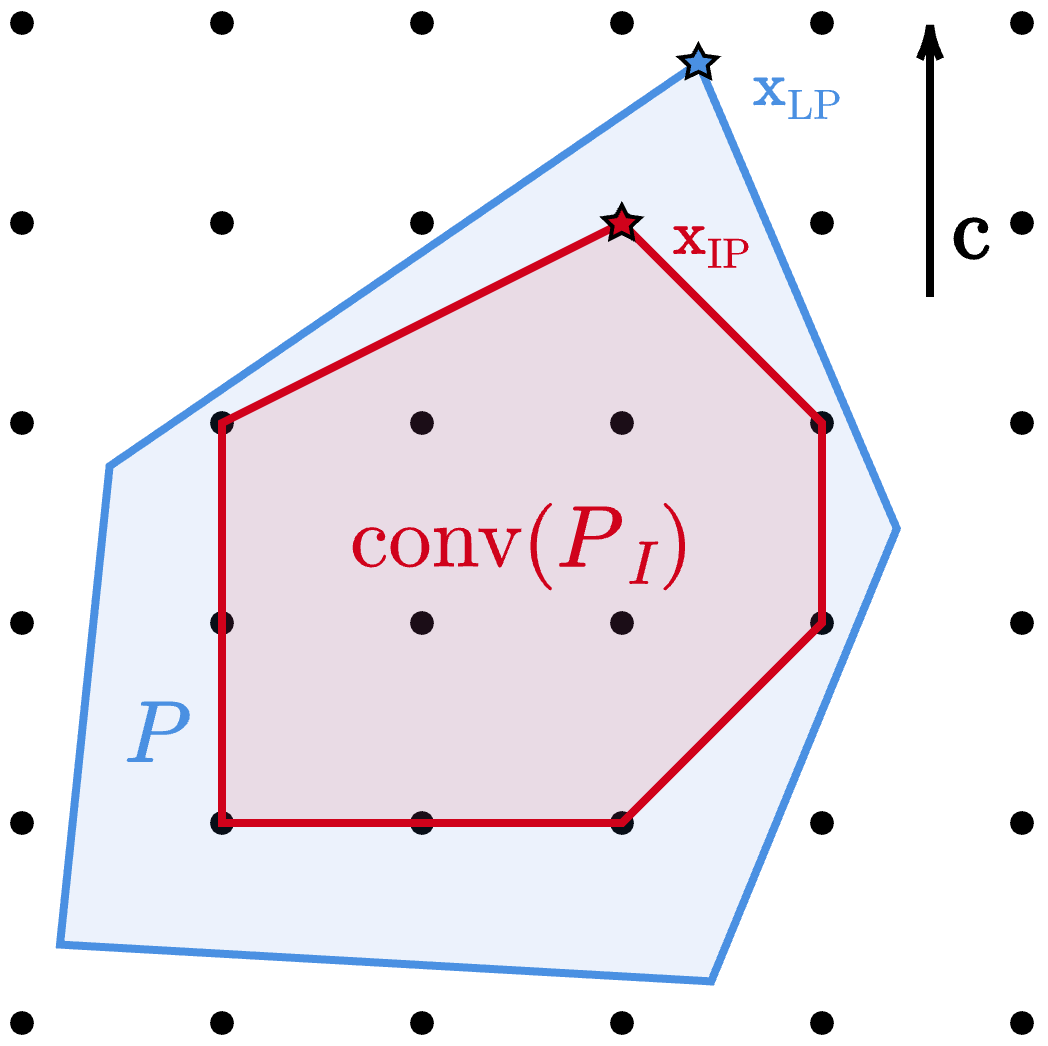}
	\caption{MILP geometry.}
	\label{fig:milp_geometry}
\end{subfigure}
	\hfill
		\begin{subfigure}[t]{0.3\linewidth}
			\centering
			\makebox[\linewidth][r]{%
				\raisebox{0.12\geomfigheight}[\geomfigheight][0pt]{%
					\includegraphics[height=0.88\geomfigheight]{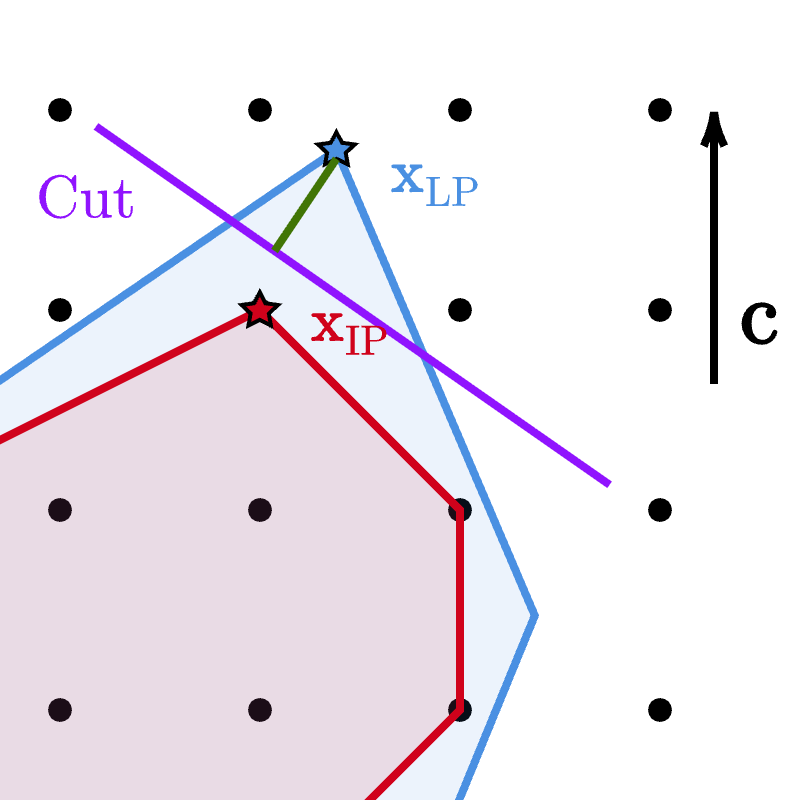}%
				}%
			}
			\caption{Efficacy.}
		\end{subfigure}
		\hfill
		\begin{subfigure}[t]{0.3\linewidth}
			\centering
			\makebox[\linewidth][r]{%
				\raisebox{0.12\geomfigheight}[\geomfigheight][0pt]{%
					\includegraphics[height=0.88\geomfigheight]{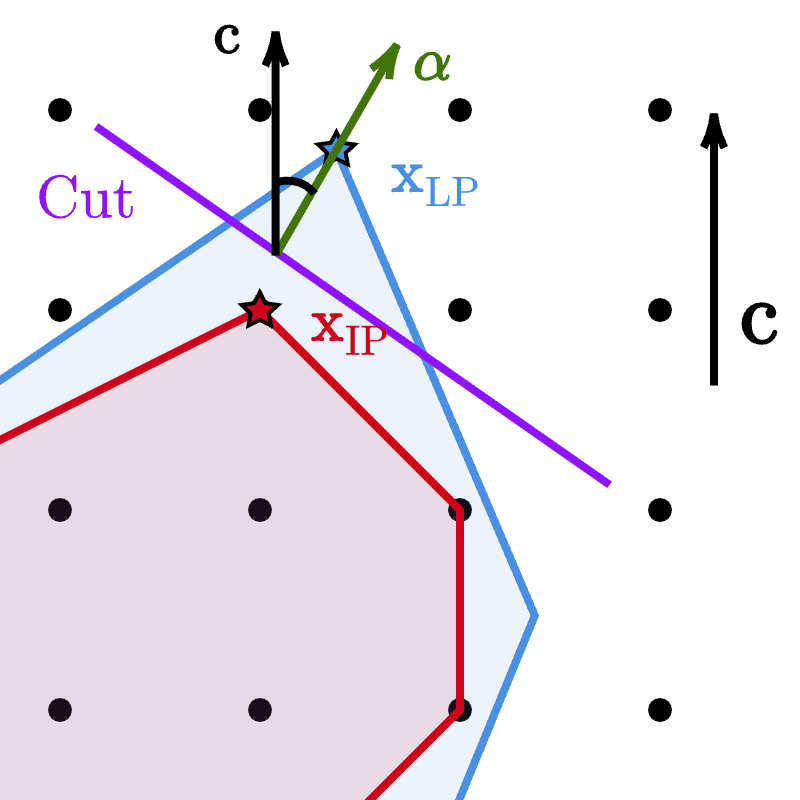}%
				}%
			}
			\caption{Parallelism.}
		\end{subfigure}

	\caption{Geometric intuition. (a) The LP relaxation $P$ contains the integer hull $\conv(P_I)$. Branch-and-cut adds cutting planes and branching constraints, closing the gap between the LP optimum $\x_{\LP}$ and the integer optimum $\x_{\IP}$. (b) Efficacy measures the distance from $\x_{\LP}$ to the cut. (c) Parallelism measures alignment with the objective $\c$.}
	\label{fig:cut_scores}
\end{figure}

\paragraph{Branch-and-bound.}
The algorithm maintains a queue of subproblems (nodes).
At each iteration, it selects a node, solves its LP relaxation, and either prunes the node (if infeasible, integral, or dominated by a known incumbent) or branches to create two child nodes that partition the feasible region.
The \emph{tree size}, defined as the number of nodes explored, is a standard measure of computational effort and the key performance metric in our analysis. We will also use the concept of {\em depth} of a node in the branch-and-bound tree, which is the number of branchings that were used in the tree to obtain this subproblem from the initial (root) relaxation.
To branch at a node $N$, one selects an integer variable $x_j$ that is fractional in some optimal LP solution.
Branching on $x_j$ creates two children in the branch-and-bound tree by imposing $x_j \le \lfloor x_j^* \rfloor$ and $x_j \ge \lceil x_j^* \rceil$,
where $x_j^*$ is the chosen LP solution value.
We write $N^{(0)}$ and $N^{(1)}$ for the two children, and
\[
  z^{(0)}(N,j)=z(N^{(0)}),\qquad z^{(1)}(N,j)=z(N^{(1)}).
\]
The corresponding LP bound improvements are
\begin{align}\label{eq:sb_improvements_prelim}
  \Delta^{(0)}(N, j) &= z(N)-z^{(0)}(N,j),\notag\\
  \Delta^{(1)}(N, j) &= z(N)-z^{(1)}(N,j).
\end{align}
If a child is infeasible, we set its LP value to $-\infty$ and the improvement to $\infty$.

\paragraph{Cutting planes.}
A cutting plane is a valid inequality satisfied by all points in $P_I$ but violated by the current LP optimum.
Adding such an inequality tightens the relaxation without excluding any mixed-integer feasible point.
In this paper we focus on root cuts: given a set of valid cuts $\Ccal$, we add them to the root relaxation before starting branch-and-bound.
This is the setting in which LP bound improvement is typically computed and used as a training signal.

\subsection{Score-Based Decisions and Expert Signals}\label{sec:experts}

Many decision rules in leading solvers such as SCIP are implemented as score-based rules~\citep{achterberg2009scip}.
At a node $N$, let $\Acal(N)$ denote the candidate action set, such as branching variables or candidate cuts.
A \emph{scoring rule} assigns a real value $\Score(N,a)$ to each $a\in\Acal(N)$, and the induced policy selects action
\[
  a^*\in\argmax_{a\in \Acal(N)} \Score(N,a),
\]
with ties broken by a fixed rule.

\begin{definition}[Score-based policy]\label{def:score_based_policy}
	Fix a scoring rule $\Score(N,a)$.
	A tie-breaking rule $\tau$ maps any nonempty finite set $S$ to an element $\tau(S)\in S$.
	The induced policy selects, at node $N$,
	\[
		\pi_{\Score,\tau}(N)=\tau\!\left(\argmax_{a\in\Acal(N)}\Score(N,a)\right).
	\]
\end{definition}

Machine learning approaches often train a model to predict an \emph{expert score} from inexpensive features and then apply the same argmax rule at test time.
Our results show that this local score supervision can produce much larger tree sizes compared to the expert, even when the predicted scores are uniformly close to the expert scores.

\paragraph{Strong branching.}
The standard expert oracle for branching is strong branching (SB).
A common SB score is the product rule
\begin{equation}\label{eq:sb_score}
  \Score_{\mathrm{SB}}(N, j)
   = \max(\Delta^{(0)}(N, j), \eta_{\mathrm{SB}})
   \cdot \max(\Delta^{(1)}(N, j), \eta_{\mathrm{SB}}),
\end{equation}
where a fixed constant $\eta_{\mathrm{SB}}>0$ ensures positivity when one improvement is zero.
If either child is infeasible, we define $\Score_{\mathrm{SB}}(N,j)=\infty$.
Strong branching is effective but expensive, so solvers often approximate it using proxy information such as pseudocost estimates or learned predictors~\citep{achterberg2009scip,achterberg2007constraint,Khalil2016,Alvarez2017,gasse2019exact,Gupta2020}.

\paragraph{LP bound improvement.}
For a candidate cut $c\in\Acal(N)$ at the root, the expert signal used in several learning pipelines is its \emph{LP bound improvement}: the change in the root LP value after adding the cut and resolving the LP~\citep{coniglio2015boundImprovement,Paulus2022,puigdemont2024learning}. Given an instance $I$ and a root cut set $\Ccal$, let $z_{\LP}(I;\Ccal)$ denote the optimal value of the root LP relaxation of $I$ after adding $\Ccal$ at the root, and write $z_{\LP}(I)$ when $\Ccal=\emptyset$; the LP bound improvement is simply  $z_{\LP}(I) - z_{\LP}(I;\Ccal)$. Evaluating this signal for many cuts requires many additional LP solves, so practical solvers and learning pipelines often also use cheaper proxy features computed from the current LP solution and the cut coefficients~\citep{achterberg2009scip,wesselmann2012implementing}.
For concreteness, for a violated cut $\bm{\alpha}^\top \x \le \beta$ at the root LP optimum $\x_{\LP}$, two standard proxies are efficacy and objective parallelism:
\[
  \phi_{\text{eff}}(\bm{\alpha}, \beta) = \frac{\bm{\alpha}^\top \x_{\LP}-\beta}{\|\bm{\alpha}\|},
  \quad
  \phi_{\text{par}}(\bm{\alpha}) = \frac{|\langle \bm{\alpha}, \c \rangle|}{\|\bm{\alpha}\| \|\c\|}.
\]
Solvers use these quantities either directly as scores or as inputs to linear scoring rules.
\Cref{fig:cut_scores} illustrates their geometric meaning.

\subsection{Tree Size Metrics and Local Agreement}

Our theorems compare policies and cut sets in terms of tree size.
We use two tree size metrics, depending on whether we model a fixed solver configuration or establish a structural lower bound.
Given an instance $I$, a branching policy $\pi$, and a set of root cuts $\Ccal$, we write $|\Tcal_{\pi}(I;\Ccal)|$ for the number of nodes produced by running branch-and-bound with best bound node selection (selecting an open node with maximum LP value, ties broken by the smallest index), branching only on variables that are fractional in the node LP optimum.
When no cuts are added, we write $|\Tcal_{\pi}(I)|$.
The strong branching policy induced by \eqref{eq:sb_score} is denoted by $\pi_{\mathrm{SB}}$.
For statements about cut selection that do not fix a branching rule, we write $|\Tcal(I;\Ccal)|$ for the minimum number of nodes in {\em any} branch-and-bound tree that solves $I$ after adding $\Ccal$ at the root, over all node selection and branching rules that branch only on LP fractional variables.

\paragraph{Standing assumptions.}\label{para:convention}
Unless stated otherwise, our results hold under the following conventions.
We consider maximization problems.
Node selection uses best bound, with ties broken by the smallest index.
Branching is restricted to variables that are fractional in an optimal LP solution at the current node.
This matches standard branch-and-bound implementations, including strong branching, and avoids branching on already integral coordinates.
Our constructions respect this restriction, 
	although allowing such branching can reduce optimal tree size on some instances \citep{dey2024theoretical}.
	We count tree size as the total number of nodes, including the root.
	In our lower bound proofs, we may assume without loss of generality that the algorithm starts with an incumbent, meaning the objective value of a known integer feasible solution, and this value can be as large as the optimal objective value.
	A stronger incumbent can only strengthen pruning and therefore can only decrease tree size.
	We use $N$ for B\&B nodes throughout, $\Delta^{(0)}(N,j)$ and $\Delta^{(1)}(N,j)$ for the LP bound improvements in \eqref{eq:sb_improvements_prelim}, and $\Score_{\mathrm{SB}}(N,j)$ for the strong branching score \eqref{eq:sb_score}.

\section{Cutting Plane Selection} \label{sec:cutting}

This section develops the two obstacles from \cref{sec:intro} for cut selection in learning pipelines.

First, the expert signal used for training can be misaligned with tree size. Even when the signal is evaluated exactly, selecting cuts by LP bound improvement can yield exponentially larger trees than selecting by a simple proxy such as efficacy (see \cref{sec:experts}).

Second, branch-and-cut performance can be highly sensitive to small perturbations in a root cut set. Such perturbations can be amplified by the recursion, yielding exponential gaps in tree size. As a result, two root cut sets that are nearly identical under local metrics can still lead to exponentially different trees.
Two mechanisms drive this instability. First, when the score gap between candidate cuts is small, a minor prediction error can flip the argmax and select a different cut.
Second, repeating the same small perturbation across many blocks of a product instance can yield an exponential gap in tree size.


We present these separations in two settings. The first measures tree size under strong branching, while the second uses the minimum tree size over all branching and node selection rules to obtain a structural lower bound.

\subsection{Suboptimality of LP Bound Improvement}

Many imitation learning approaches to cut selection use LP bound improvement as the expert signal because it measures the immediate change in the dual bound.
Evaluating this signal over a large pool of candidate cuts requires many additional LP solves and can be prohibitively expensive.
For this reason, practical pipelines often rely on proxy scores such as efficacy, which can be computed from the current LP solution easily.
It is natural to think that, if one could afford LP bound improvement, then one should select cuts according to it.
However, the theorem below shows that this intuition can fail, even by an exponential factor.

Before stating the theorem, we note that LP bound improvement can disagree with standard proxy scores even at the root.
Section~\ref{app:toy_cut_mismatch} gives a mixed-integer example with two variables that illustrates this mismatch.
In that instance, LP bound improvement ranks two valid cuts in the opposite order from every score of the form $\lambda \phi_{\text{eff}}+(1-\lambda)\phi_{\text{par}}$ with $\lambda\in[0,1]$.

\begin{theorem}[Suboptimality of LP bound improvement]\label{thm:global_cut_eff_vs_improve}
	For every $m\in\N$, there exists a MILP instance $I_m$ with $O(m)$ binary variables and two root cut sets $\Ccal_1$ and $\Ccal_2$ such that the following holds under the standing assumptions with strong branching.
	Consider the candidate pool $\Ccal_1\cup\Ccal_2$ and a budget of $m$ cuts at the root.
	\begin{enumerate}
		\item Selecting the $m$ cuts with the largest LP bound improvements from $\Ccal_1\cup\Ccal_2$ yields $\Ccal_2$, while selecting the $m$ cuts with the largest efficacy values yields $\Ccal_1$.
		\item The resulting search trees satisfy
		      \[
			      \begin{aligned}
				      |\Tcal_{\pi_{\mathrm{SB}}}(I_m;\Ccal_1)| & \le 2m+1,                                     \\
				      |\Tcal_{\pi_{\mathrm{SB}}}(I_m;\Ccal_2)| & \ge 1+6\left(2^{\lfloor m/9\rfloor}-1\right).
			      \end{aligned}
		      \]
	\end{enumerate}
\end{theorem}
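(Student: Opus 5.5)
We plan to prove the theorem by an explicit product-type construction built from $m$ independent gadgets and one auxiliary block. Since the bound $1+6(2^{\lfloor m/9\rfloor}-1)$ involves $\lfloor m/9\rfloor$, the target structure is this: $\Ccal_2$ (the LP-improvement choice) spends its cuts on a single "deep" part of the instance and leaves roughly $m/9$ independent fractional gadgets untouched. Strong branching must then explore them multiplicatively, with each level contributing a factor 2 and each gadget contributing a constant of 6 nodes per leaf. $\Ccal_1$ (the efficacy choice), in contrast, places one cut in each of the $m$ gadgets, so that afterwards the tree is a path: at each of $m$ levels one child is pruned, giving at most $2m+1$ nodes.

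\textbf{Instance.} Concretely, we take $I_m$ to be a 0-1 MILP whose variables split into blocks $B_1,\dots,B_m$ of constant size, plus an auxiliary block $D$. The objective is separable, so the LP relaxation decomposes over blocks, and $z_{\LP}$ of any node is a sum of block values. Each block $B_i$ carries a small integrality gap $g$ that can be closed by a single cut $c_i\in\Ccal_1$. That cut is shallow in bound but deep in Euclidean distance: we normalize its coefficients so that $\phi_{\text{eff}}(c_i)$ is large while its LP improvement is only $g$. The cuts of $\Ccal_2$ are the opposite. They act on block $D$ (or on coupled copies inside the blocks), have large objective weight, and so have LP improvement $G>g$, but their coefficient vectors have huge norm, which makes their efficacy tiny. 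Part 1 of the theorem then reduces to computing, for each candidate cut, its efficacy and its single-cut LP improvement, and checking the two rankings. Separability makes each improvement a local one-block LP computation. We will also check that the top-$m$ selections are exactly $\Ccal_1$ and $\Ccal_2$, choosing $|\Ccal_2|=m$ and making the gaps strict so that no tie-breaking issue arises.

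\textbf{Upper bound for $\Ccal_1$.} With $\Ccal_1$ added, each $B_i$ becomes integral, or nearly so. We arrange that the only remaining fractionality lies in $D$, in a chain of $m$ variables. Strong branching on such a chain produces one infeasible or pruned child per level; its score is $\infty$, so SB picks it, and the tree is a path of $2m+1$ nodes. Best-bound node selection combined with the standing incumbent assumption (incumbent equal to the optimum) handles the pruning.

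\textbf{Lower bound for $\Ccal_2$.} With $\Ccal_2$ added, $D$ is fixed, but about $m/9$ of the blocks remain fractional. We group the blocks into $\lfloor m/9\rfloor$ super-gadgets; this grouping is the source of the factor 9. We then show that no node can be pruned by bound until every super-gadget has been branched on, because each super-gadget contributes a positive gap that is not closed until all of its branchings are done, and the incumbent equals the optimum. This yields the lower bound via the recursion $T(k)\ge 2T(k-1)+6$-type counting, i.e. $1+6(2^k-1)$.

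\textbf{Main obstacle.} The hard part is controlling strong branching in the $\Ccal_2$ tree. We must show that at every node SB's choice, including its ability to branch into $D$ or into already-processed blocks, never creates an early prune. This needs an invariant that the SB scores of variables in untouched gadgets always dominate, while branching on them still leaves the bound above the incumbent. We would first try to enforce this with symmetric gadgets, in which both children have equal improvement strictly less than the block gap, so that SB's score equals $\eta_{\mathrm{SB}}$-independent products and we can verify it uniformly.
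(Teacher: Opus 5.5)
Your proposal is an architecture rather than a proof, and the step that carries the theorem, the exponential lower bound for $\Ccal_2$, does not go through as written. You rely on the invariant that no node is pruned by bound while some gadget is unbranched, ``because each super-gadget contributes a positive gap.'' This ignores that the node bound is a \emph{sum} of block values, and a branched block can end strictly \emph{below} its integer optimum. In the paper's gadget, the $x_i=0$ side of a block ends at $(0,0)$ with value $0$, while the block optimum is $6$. The residual gap $\frac{27}{4}-6=\frac34$ of each untouched block must pay for these deficits, and $\frac{27}{4}(m-d)>6m$ holds only while $m>9d$. That is where $\lfloor m/9\rfloor$ actually comes from. Your grouping into $\lfloor m/9\rfloor$ super-gadgets is reverse-engineered from the target bound and supplies no mechanism. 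It also contradicts your own design: if $\Ccal_2$ acts only on $D$, nothing makes any $B_i$ integral, so all $m$ blocks stay fractional, not ``about $m/9$.'' Finally, you leave control of strong branching as the ``main obstacle,'' with only a hope about symmetric gadgets. Without a concrete gadget and explicit score computations, nothing rules out SB choosing variables that let the tree collapse early.

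The paper closes these gaps with one explicit two-dimensional gadget (from Shah et al.) and no auxiliary block: both families place one cut in every block.
\begin{itemize}
\item The $\Ccal_1$ cut $20y_i-7x_i\le 0$ is deeper in Euclidean distance (efficacy $\approx 0.175$ versus $\approx 0.165$) but poorly aligned with the objective $(6,5)$. It only moves the block optimum from $A=(0.9,0.5)$ to $B=(1,0.35)$, an improvement of $\frac{3}{20}$ versus $\frac{23}{20}$.
\item At $B$ only $y_i$ is fractional and $y_i=1$ is infeasible. So the $\Ccal_1$ tree is a chain ending at an integral node, and no incumbent is needed.
\item The $\Ccal_2$ cut leaves $A'=(0.5,0.75)$. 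A direct score computation shows SB prefers $x_i$ for every $\eta_{\mathrm{SB}}$, with index tie-breaking when $\eta_{\mathrm{SB}}\ge\frac{21}{4}$. Both $x_i$-children are feasible, and $y_i$ then has infinite score.
\item Hence each completed block adds exactly $6$ nodes and two surviving branches, giving $1+6(2^k-1)$ once the deficit comparison above is made.
\end{itemize}

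Two smaller points. First, efficacy depends only on the halfspace and is invariant under positive rescaling of $(\bm{\alpha},\beta)$. ``Normalizing'' a cut cannot make its efficacy large; the separation from LP improvement has to be geometric, as in the gadget above. Second, the standing assumption allows a warm-start incumbent only in lower-bound arguments, since it can only shrink the tree. Your $\Ccal_1$ upper bound, with blocks that are ``integral, or nearly so,'' should not lean on it.
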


The construction (based on the two-dimensional gadget of \citet{shah2025non}) and proof appear in Section~\ref{app:global_cut_eff_vs_improve}.

\cref{thm:global_cut_eff_vs_improve} shows that, for a fixed candidate pool and cut budget, selecting root cuts by LP bound improvement or by efficacy can yield strong branching trees whose sizes differ exponentially.
For learned policies, this means that even perfect prediction of LP bound improvement does not control tree size in the worst case: selecting cuts by LP bound improvement can still yield a strong branching tree that is exponentially larger than the one obtained by selecting cuts based on efficacy alone.

\subsection{Sensitivity to Cut Perturbations}

\cref{thm:global_cut_eff_vs_improve} implies that two root cut sets can yield strong branching trees whose sizes differ exponentially, even when they are selected from the same candidate pool under the same cut budget by two common cut selection rules.
We further strengthen this separation by showing that such exponential gaps can arise even when the two cut sets are nearly identical, differing only by an arbitrarily small perturbation of their right-hand sides.

Our construction builds a product instance from $m$ disjoint copies of a base gadget.
This is in the spirit of \citet[Theorem~2.2]{basu2020complexity}, which gives a $2^{m+1}-1$ lower bound on the B\&B tree size for the stable set problem on $m$ disjoint triangles.
The lemma below abstracts the property behind this product lower bound and extends it to more general problems: for each coordinate, there are optimal integer solutions that take different values on that coordinate.

\begin{lemma}\label{lem:product_gadget}
	Let $d\in\N$, let $B\subseteq[0,1]^d$ be a nonempty compact convex set, and let $\w\in\R^d$.
	Define $\beta=\max\set{\langle\w,\x\rangle:\x\in B\cap\set{0,1}^d}$.
	Assume:
	\begin{enumerate}
		\item[(i)] $\max\set{\langle\w,\x\rangle:\x\in B}>\beta$, and
		\item[(ii)] for every $i\in[d]$, there exist $\u,\v\in B\cap\set{0,1}^d$ with $\langle\w,\u\rangle=\langle\w,\v\rangle=\beta$ and $u_i\neq v_i$.
	\end{enumerate}
	For $m\in\N$, consider the $0$-$1$ MILP
	\[
		\max\set{\sum_{t=1}^m\langle\w,\x_t\rangle:\x\in B^m\cap\set{0,1}^{dm}},
	\]
	where $\x=(\x_1,\dots,\x_m)\in\underbrace{\R^d \times \ldots \times \R^d}_{m \textrm{ times}}=\R^{dm}$ and $B^m = \underbrace{B \times \ldots \times B}_{m \textrm{ times}}$.
		Then the mixed-integer optimum equals $m\beta$, and every branch-and-bound tree solving this MILP using only variable disjunctions, i.e., disjunctions of the form $x_{t,i}\le 0$ or $x_{t,i}\ge 1$ for some $(t,i) \in [m] \times [d]$, has at least $2^{m+1}-1$ nodes. 
\end{lemma}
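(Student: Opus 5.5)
We argue by induction on $m$, tracking each leaf of the tree through the blocks it has ``touched''. First, the optimum equals $m\beta$: the feasible set is a product, so the maximum separates into $m$ independent maxima, each equal to $\beta$. Since $B\subseteq[0,1]^d$, a variable disjunction $x_{t,i}\le 0$ or $x_{t,i}\ge 1$ simply fixes $x_{t,i}$ to $0$ or $1$. Hence every node $N$ of a tree is given by a set of fixings $F_t\subseteq[d]\times\{0,1\}$ for each block $t$. Its LP relaxation is the product $\prod_t B(F_t)$, where $B(F_t)$ is the face-like slice of $B$ with those coordinates fixed, and its LP value is $z(N)=\sum_t z_t(F_t)$ with $z_t(F_t)=\max\{\langle\w,\x\rangle:\x\in B(F_t)\}$.

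The key claim concerns a leaf $N$, meaning a node that is pruned because it is infeasible, integral, or has bound at most the incumbent, which may be as good as $m\beta$. We claim that if $N$ is feasible and contains an optimal integer solution, then for every block $t$ either $F_t=\emptyset$ is impossible at a leaf, or, more precisely, pruning forces something in each block. We make this precise as follows.

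\begin{enumerate}
\item[(a)] If $F_t=\emptyset$ for some $t$, then $z_t=\max_B\langle\w,\x\rangle>\beta$ by (i). If also every other block still contains an integer point of value $\beta$ (true along optimal paths), then $z(N)>m\beta$, and the node cannot be pruned by bound or integrality.
\item[(b)] If some block has a fixing $x_{t,i}=b$, then by (ii) there is an optimal block solution with $x_{t,i}=1-b$. This solution is cut off at $N$ but lives in the sibling subtree.
\end{enumerate}

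Turning this into a counting argument, we prove by induction on $m$ the following statement: any tree for the $m$-block problem in which every leaf either is infeasible or has bound $\le m\beta$, starting from a region where each block's relaxation has value $>\beta$, has at least $2^{m+1}-1$ nodes. The natural approach is to take the first branching at the root, say on block $t$, and argue that both children must each contain a complete tree for the remaining $m-1$ blocks. Along each child, block $t$ still contains an optimal integer point (by (ii)), while the other $m-1$ blocks are untouched. The subtree restricted to those $m-1$ blocks thus must be a valid tree for the $(m-1)$-block problem, with incumbent $(m-1)\beta$ after subtracting block $t$'s contribution, which is at least $\beta$.

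The main obstacle is that in each child, later branchings may also hit block $t$ again, interleaved with branchings on other blocks. We handle this by projection: delete from the subtree all branchings on block $t$, following at each such node the child that retains an optimal block-$t$ point, whose existence (ii) guarantees as long as block $t$'s slice still has bound $\ge\beta$. This yields a tree for the other $m-1$ blocks that is no larger, whose leaves still certify bound $\le(m-1)\beta$ since block $t$ contributes $\ge\beta$. Induction then gives $\ge 2^{m}-1$ nodes per child, hence $1+2(2^m-1)=2^{m+1}-1$ in total. The base case $m=0$ is a single node, and $m=1$ needs $3$ by (i) and (ii).

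Care is needed in the invariant. The right one is that ``block $t$'s current slice contains an integer point of value $\beta$'', and the induction should be phrased over trees for general products of slices $B(F_t)$ that satisfy this invariant and have LP value $>\beta$ in every untouched block. Under this phrasing, both children of any branch on an untouched block preserve the invariant thanks to (ii).
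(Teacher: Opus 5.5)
Your proof is correct and is essentially the paper's argument. You induct on $m$, note the root cannot be pruned, and use (ii) at the root branching on $x_{t,i}$ to place an optimal block-$t$ integer point in each child. You then collapse each child subtree into a tree for the remaining $m-1$ blocks by following, at every later branching on block $t$, the child containing that point.

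The only difference is cosmetic. The paper restricts block $t$ to the fixed point $\u$, so it contributes exactly $\beta$ and leaf bounds can only drop. You keep block $t$'s slice and use that its LP value is at least $\beta$. Note that what keeps the followed child good is just that the tracked point lies in it, not (ii) or the slice bound being at least $\beta$, as you yourself correct in your last paragraph.
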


The proof is an induction on $m$: the condition on coordinates ensures that both children of any root branching decision contain optimal solutions, preventing pruning. The full proof appears in Section~\ref{app:cutting_proofs}.

We now state our main result on cut perturbations. We apply \cref{lem:product_gadget} to the stable set problem on $m$ disjoint triangles and compare two root cut sets whose right-hand sides differ by an arbitrarily small amount.

\begin{theorem}[Exponential gap from tiny cut perturbations] \label{thm:cut_stability_gap}
	For any $\varepsilon > 0$ and any integer $n\ge 4$, let $m=\lfloor n/3\rfloor$ and $\varepsilon'=\min\set{\frac{1}{4},\frac{\varepsilon}{m}}$.
	There exists a mixed-integer linear program instance $I$ with $n$ variables and two root cut sets of valid cutting planes, $\Ccal$ and $\widetilde{\Ccal}$, such that the following hold.
	\begin{enumerate}
		\item The cut sets $\Ccal$ and $\widetilde{\Ccal}$ can be paired so that each pair has identical coefficients and their right-hand sides differ by $\varepsilon'$.
		\item The root LP values satisfy
		\[
				|z_{\LP}(I; \Ccal) - z_{\LP}(I; \widetilde{\Ccal})| \leq m\varepsilon' \leq \varepsilon.
		\]
		\item Both cut sets close at least a $(1 - 2\varepsilon')$ fraction of the root LP gap.
		\item The minimum tree sizes satisfy
		      \[
			      |\Tcal(I;\Ccal)| = 1 \quad \text{and} \quad |\Tcal(I;\widetilde{\Ccal})| = 2^{m+1} - 1.
		      \]
	\end{enumerate}
	\end{theorem}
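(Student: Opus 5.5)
We instantiate \cref{lem:product_gadget} with the stable set problem on $m=\lfloor n/3\rfloor$ disjoint triangles, padding with $n-3m$ dummy variables that have zero objective and are fixed by bounds. Each triangle block $t$ has variables $x_{t,1},x_{t,2},x_{t,3}\in\{0,1\}$, edge constraints $x_{t,i}+x_{t,j}\le 1$, and objective $\sum_i x_{t,i}$. Its LP optimum is $\tfrac32$, attained at $(\tfrac12,\tfrac12,\tfrac12)$, and its integer optimum is $1$. The instance $I$ is the product of these blocks, so $z_{\LP}(I)=\tfrac32 m$ and the integer optimum is $m$. The two cut sets are the clique cuts and their relaxations:
\[
\Ccal=\set{x_{t,1}+x_{t,2}+x_{t,3}\le 1 : t\in[m]},\qquad
\widetilde{\Ccal}=\set{x_{t,1}+x_{t,2}+x_{t,3}\le 1+\varepsilon' : t\in[m]}.
\]
Both sets are valid, since every stable set in a triangle has at most one vertex. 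Pairing the cuts block by block gives item~1 immediately.

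For items~2 and~3 we compute the root LP values blockwise, because the LP separates across blocks. With $\Ccal$, each block's LP value is exactly $1$: the clique cut bounds the objective, and the constraint system is then the integral simplex $\{x\ge0,\ \sum_i x_i\le 1\}$. So $z_{\LP}(I;\Ccal)=m$. With $\widetilde{\Ccal}$, each block's value is $1+\varepsilon'$. It is attained at $x=((1+\varepsilon')/3)\1$, which satisfies the edge constraints because $2(1+\varepsilon')/3\le 1$ when $\varepsilon'\le \tfrac14$. So $z_{\LP}(I;\widetilde{\Ccal})=m(1+\varepsilon')$. The difference is $m\varepsilon'\le\varepsilon$, which is item~2. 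The gap closed is $(\tfrac32m-m(1+\varepsilon'))/(\tfrac12 m)=1-2\varepsilon'$ for $\widetilde{\Ccal}$, and $1$ for $\Ccal$, which is item~3.

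For item~4, the $\Ccal$ case is immediate. The root LP polytope is a product of integral simplices, so the objective is maximized at an integral vertex. Any optimal vertex returned by the LP is therefore integral, the root is pruned, and the tree has size $1$. (If one worries about non-vertex optima, note the convention that a node is solved once an integral LP optimum is found.)

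For $\widetilde{\Ccal}$ we apply \cref{lem:product_gadget} with $d=3$, $\w=\1$, and
\[
B=\set{x\in[0,1]^3: x_i+x_j\le 1 \text{ for all } i\ne j,\ \textstyle\sum_i x_i\le 1+\varepsilon'}.
\]
Then $\beta=1$. Condition~(i) holds because $\max_B\langle\w,x\rangle=1+\varepsilon'>1$. Condition~(ii) holds because the unit vectors $\e_i,\e_j$ are optimal integer points differing in coordinate $i$. The lemma then gives at least $2^{m+1}-1$ nodes for every tree using variable disjunctions. We also need equality, which requires an explicit tree of that size. The natural candidate branches on one fractional variable per block in sequence, giving a complete binary tree of depth $m$. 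We must check two things. First, the node LP after fixing one variable in each block is integral or prunable by bound. For example, fixing $x_{t,1}=0$ leaves $x_{t,2}+x_{t,3}\le 1$, which has an integral optimum of value $1$; fixing $x_{t,1}=1$ forces the other two variables to $0$. Second, at each node some variable in an unbranched block is fractional in the LP optimum, so the restriction to fractional variables is respected.

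I expect the main obstacle to be the upper bound. The lower bound in \cref{lem:product_gadget} assumes an incumbent, and we must confirm that with best-bound selection and fractional-only branching the explicit tree really has exactly $2^{m+1}-1$ nodes. This means checking that every node at depth $m$ has LP value exactly $m$ with an integral optimum, and that no node is forced into further branching by a fractional optimal LP solution being returned. If a vertex issue arises, I would fall back on choosing integral optimal vertices at the leaves, which is legitimate under the minimum-over-rules definition of $|\Tcal(I;\widetilde{\Ccal})|$. The padding variables and their zero objective need only a brief remark.
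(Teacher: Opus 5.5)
Your proposal is correct and follows essentially the same route as the paper. The shared steps are the triangle product instance with clique cuts and their $\varepsilon'$-relaxations, \cref{lem:product_gadget} applied with $B_{\varepsilon'}$, $\w=\1$, $\beta=1$ for the lower bound, and the complete depth-$m$ tree that branches once per block for the matching upper bound. Two details differ harmlessly: the paper uses the witness $(1-\varepsilon',\varepsilon',\varepsilon')$ rather than $\frac{1+\varepsilon'}{3}\1$, and it certifies $|\Tcal(I;\Ccal)|=1$ by matching the root bound to an integer solution of value $m$ rather than by integrality of the simplex.

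The obstacle you anticipate for the upper bound largely dissolves. $|\Tcal(I;\widetilde{\Ccal})|$ minimizes over node selection rules, so best bound plays no role. The branching variable is fractional in every optimal LP solution, because a free block cannot reach value $1+\varepsilon'$ with any coordinate in $\{0,1\}$: fixing $x_{t,i}=0$ caps the block at $1$, and fixing $x_{t,i}=1$ forces it to exactly $1$.
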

    The construction and proof appear in Section~\ref{sec:cut_stability_gap}.

	At a high level, the construction starts from the stable set problem on $m$ disjoint triangles, as in \citet[Theorem~2.2]{basu2020complexity}.
	We compare the cut set that enforces $x_{t,1}+x_{t,2}+x_{t,3}\le 1$ in every block with a weakened version whose right-hand side is $1+\varepsilon'$.
	Both cut sets close almost all of the root LP gap, but the weaker cut set yields an exponential minimum tree size.

	\begin{remark}\label{rem:cut_proxy_proximity_gap}
		In the construction of \cref{thm:cut_stability_gap}, the minimum tree size changes from $1$ to $2^{m+1}-1$ under a perturbation that is arbitrarily small under common proxy scores evaluated at the root.
		Consider a paired cut $c:\bm{\alpha}^\top \x \le \beta$ in $\Ccal$ and $\tilde c:\bm{\alpha}^\top \x \le \beta+\varepsilon'$ in $\widetilde{\Ccal}$, and let $\x_{\LP}$ be the root LP optimum of $I$ before adding any cuts.
		Then $\phi_{\text{par}}(\bm{\alpha})$ is identical for $c$ and $\tilde c$, and the efficacies satisfy
		\[
			\Bigl|\phi_{\text{eff}}(\bm{\alpha},\beta)-\phi_{\text{eff}}(\bm{\alpha},\beta+\varepsilon')\Bigr|
			=\frac{\varepsilon'}{\|\bm{\alpha}\|}.
		\]
		In particular, for any $\lambda\in[0,1]$, the proxy score $\lambda\phi_{\text{eff}}+(1-\lambda)\phi_{\text{par}}$ differs by at most $\lambda\varepsilon'/\|\bm{\alpha}\|$ across the pair.
		Thus, even a small estimation error in a learned proxy score can reverse the ranking between $c$ and $\tilde c$ when the proxy score gap is small.
	\end{remark}


\begin{remark}
	\cref{thm:cut_stability_gap} also reveals two general insights relevant to the broader integer programming community:
	\begin{enumerate}
		\item The B\&C tree size can be highly sensitive to small changes in cut definitions. \Cref{thm:cut_stability_gap} shows that a small perturbation of the right-hand sides in a root cut set can lead to exponentially different tree sizes. This matters because slightly changing an inequality's right-hand side is a common technique used to ensure cut validity in the presence of numerical rounding errors~\citep{cook2009numerically,cornuejols2013safety}.
			\item Closing a large fraction of the integrality gap does not necessarily imply a small tree size. In the construction, the cut set $\widetilde{\Ccal}$ closes $(1-2\varepsilon')$ of the gap, which can be arbitrarily close to $1$, yet branch-and-bound still requires exponentially many nodes. This shows that gap closure alone does not control tree size in the worst case, although empirical studies of full strong branching suggest that tree size often decreases once a set of cuts closes a substantial fraction of the integrality gap \citep{shah2025non}.
	\end{enumerate}
\end{remark}

\section{Branching Variable Selection} \label{sec:branching}

For branching variable selection, the same two obstacles from \cref{sec:intro} arise.
First, even the expert oracle can be misaligned with tree size, since strong branching can be exponentially suboptimal~\citep{dey2024theoretical}.
Second, branch-and-bound recursion can amplify small errors in local scores, or a small number of deviations from an expert, leading to exponential gaps in tree size.
This section focuses on the second obstacle and establishes two separations.
First, for an explicit family $\{\widetilde I_n\}$ of MILP instances, for every $\varepsilon>0$ we construct a score-based policy $\hat\pi$ whose scoring function satisfies $|\widehat{\Score}-\Score_{\mathrm{SB}}|\le \varepsilon$ on every candidate variable at every node visited by either $\pi_{\mathrm{SB}}$ or $\hat\pi$, yet the resulting tree size from $\hat\pi$ is  exponentially larger than the tree size from $\pi_{\mathrm{SB}}$ (\cref{thm:exp_gap_imitation}).
Second, we show that $k$ deviations from strong branching along its trajectory can already inflate tree size by $2^{\Omega(k)}$ (\cref{thm:k_deviation_linear_gap}).


Our constructions rely on two effects. When the score gap between candidate variables is small, a minor prediction error can change the argmax and alter the branching choice. Moreover, a single incorrect branching decision can move the search to a different part of the tree, after which subsequent decisions can compound the deviation. Together, these effects can turn small local differences into exponential gaps in tree size.

\subsection{Exponential Gap from Small Score Differences}

We now present our main theoretical result on branching, which shows that arbitrarily small uniform score discrepancies can lead to exponentially different trees.

\begin{theorem}[Exponential gap from arbitrarily small score differences]\label{thm:exp_gap_imitation}
	There exists a family of $0$--$1$ MILP instances $\{\widetilde I_n\}_{n \in \N}$, where each $\widetilde I_n$ has $O(n)$ variables, such that the following holds under the standing assumptions.
	For every $n\ge 3$ and every $\varepsilon>0$, there exists a branching policy $\hat\pi$ with scoring function $\widehat{\Score}$ satisfying:
	\begin{enumerate}
		\item For every node $N$ in the B\&B tree generated by either $\pi_{\mathrm{SB}}$ or $\hat\pi$ on $\widetilde I_n$, and every candidate variable index $j$,
		      \[
		      \bigl|\widehat{\Score}(N,j) - \Score_{\mathrm{SB}}(N,j)\bigr| \le \varepsilon.
		      \]
		\item The tree sizes diverge exponentially:
		      \[
			      |\Tcal_{\pi_{\mathrm{SB}}}(\widetilde I_n)| \le 2n+1
			      \quad\text{and}\quad
			      |\Tcal_{\hat\pi}(\widetilde I_n)| \ge 2^{n+1}-1.
		      \]
	\end{enumerate}
\end{theorem}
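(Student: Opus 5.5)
The plan is to build $\widetilde I_n$ by combining two components on disjoint variable blocks. The first is a \emph{hard block}: the product instance of \cref{lem:product_gadget} with $n$ copies of a gadget, for example stable set on $n$ disjoint triangles without the clique cuts. The second is a \emph{shortcut block}: a small set of variables $y_1,\dots,y_n$ (plus perhaps an auxiliary variable) with this property. Branching on the $y$'s in a fixed order proves optimality after about $2n+1$ nodes, because at each level one child is pruned at once: it is infeasible, integral, or has bound at most the incumbent. The objective couples the two blocks only through their sum, and a linking constraint ensures that once the $y$-chain is resolved, the hard block is cut off. The most natural choice is a constraint of the form $\sum_t (\text{triangle sums}) \le \text{something driven by } y$, so that fixing the $y$'s makes the remaining LP integral or dominated.

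The key tuning step is to scale coefficients so that, at every node, the strong branching score of the intended $y$ variable exceeds the score of every hard-block variable by a margin $\delta \le \varepsilon$. If some hard-block variable has a larger score, strong branching takes the wrong path. Since $\eta_{\mathrm{SB}}$ is fixed and the scores are products of LP improvements, I would multiply the $y$-block objective by a parameter $\lambda$. I would then pick $\lambda$ as a function of $\varepsilon$ so that the $y$-score is strictly largest but exceeds the next-best score by less than $\varepsilon$. A cleaner alternative is to make the scores exactly tied and let $\pi_{\mathrm{SB}}$'s tie-breaking favour $y$. Then $\widehat{\Score}$ equals $\Score_{\mathrm{SB}}$ except that the $y$-scores are lowered by $\varepsilon/2$, which gives the uniform bound of item 1 at every node trivially. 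I would use a tiny strict margin $\delta<\varepsilon$ and set $\widehat{\Score}(N,j)=\Score_{\mathrm{SB}}(N,j)-\varepsilon\cdot\1[j\in y]$, defined on all nodes. Item 1 then holds everywhere, and the only remaining requirement is that $\hat\pi$ never picks a $y$ variable while hard-block variables remain fractional.

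For the upper bound, I would trace $\pi_{\mathrm{SB}}$ with best-bound node selection level by level. I would check that the LP optimum keeps a fractional $y_k$ along the chain and that one child of each branching is pruned, which gives at most $2n+1$ nodes.

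For the lower bound, I would show that under $\hat\pi$ every node reached keeps the $y$ LP values fixed at a configuration whose contribution is already optimal. The tree is therefore effectively a branch-and-bound tree on the hard block alone, with hard-block branching only. I would then invoke the argument of \cref{lem:product_gadget}: both children of every branching keep an optimal solution and a strictly larger LP bound, so neither can be pruned, even with the optimal incumbent. This yields at least $2^{n+1}-1$ nodes.

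The main obstacle is the score calibration across all nodes of both trees simultaneously. The $y$-scores must beat the hard-block scores under $\pi_{\mathrm{SB}}$ but lose under $\hat\pi$. They must also remain $\varepsilon$-close deep inside the exponential tree, where hard-block LP improvements change as variables get fixed. I would first try to design the gadget so that every relevant LP improvement is the same constant $\Delta$ at every node. Then all candidate scores are equal, say $\max(\Delta,\eta_{\mathrm{SB}})^2$, up to the designed margin $\delta$, and the perturbation argument is uniform.
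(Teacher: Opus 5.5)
Your overall shape is right: let $\pi_{\mathrm{SB}}$ take a gap-closing ``shortcut'' variable out of a tie, and let an $\varepsilon$-shift send $\hat\pi$ to variables whose branching never closes the gap. However, the step you yourself flag as the main obstacle is left open, and both remedies you offer fail.

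Rescaling only the $y$-part of the objective by a single $\lambda$ changes the relative bounds, and hence the pruning. It also gives no control of the margin at the exponentially many nodes of $\hat\pi$'s tree, where the improvements vary with the fixings.

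Making every relevant improvement equal to one constant $\Delta$ is, taken literally, self-defeating. If every branching lowers the bound by the same $\Delta$, node bounds depend only on depth. Both trees must then contain a complete binary tree down to the depth where bounds reach $\mathrm{OPT}$. Beyond that depth your incumbent-robust lower-bound argument certifies nothing, so it cannot separate the two policies.

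The missing idea is that the improvements need not be equal; they only need to lie below the clamp. Because the product score \eqref{eq:sb_score} uses $\max(\Delta^{(b)},\eta_{\mathrm{SB}})$, it suffices that every improvement at every node is less than $\eta_{\mathrm{SB}}$. You get this by multiplying the \emph{whole} objective by a small $\alpha>0$. The paper takes $\alpha=\eta_{\mathrm{SB}}/(2(M+27))$ and bounds each improvement by the node's block value, using nonnegativity of the objective. Global scaling leaves LP optima, the best-bound order and every pruning decision unchanged, yet gives $\Score_{\mathrm{SB}}\equiv\eta_{\mathrm{SB}}^2$ at every node of every tree. The theorem then reduces to tie-breaking: put the shortcut variables first in the index order, and raise the non-shortcut scores by $\varepsilon/2$ (or lower the shortcut ones, as you suggest).

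Two further steps need repair.

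\emph{Infeasible children.} You allow the shortcut's bad child to be pruned by infeasibility. But then $\Score_{\mathrm{SB}}=\infty$ for that variable, no $\varepsilon$-perturbation can demote it, and $\hat\pi$ follows the shortcut as well. You must check that both children of every candidate are LP-feasible at every node of both trees. The paper does this using the packing structure.

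\emph{The shortcut gadget.} It is never built, and the aggregate coupling you suggest breaks the upper bound. Once the $y$'s are fixed, a single constraint $\sum_t s_t\le n$ on the triangle sums has fractional optimal vertices. One example: one triangle at $(\tfrac12,\tfrac12,\tfrac12)$, another at $(\tfrac12,0,0)$, and the rest at unit vectors. So the last chain node need not be integral. Since an upper bound cannot assume a pre-supplied incumbent, $\pi_{\mathrm{SB}}$ must keep branching.

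The coupling has to be per block, so that one shortcut fixing resolves its own block exactly. The paper builds both roles into each block. Fixing $b_i=0$ closes the whole per-block gap of $7$, while $b_i=1$ drops the bound by $M-7$; by contrast, $y_{i,1}$ drops it by only $3$ on each side. Hence every node of $\hat\pi$'s tree at depth $d\le n$ has bound $\alpha(n(M+27)-3d)>\mathrm{OPT}$. The lower bound is then a direct count, with no appeal to \cref{lem:product_gadget}. That lemma would in any case need re-proving once linking constraints destroy the product structure.
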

The construction and proof appear in Section~\ref{app:exp_gap_imitation}.

\cref{thm:exp_gap_imitation} reveals a fundamental instability of score-based branching policies. Even when a scoring function matches strong branching scores up to an arbitrarily small uniform error, the resulting trees can differ exponentially in size. This shows that small score differences, whether from learning error or numerical perturbations, can lead to large changes in solver performance. The exponential gap arises because a small score perturbation can change the set of maximizers, and under a fixed tie-breaking rule this can change the selected branching variable at many nodes. These local changes can then compound through the recursive structure of branch-and-bound.

A score-based policy consists of a scoring function and a tie-breaking rule as in \cref{def:score_based_policy}.
\cref{thm:exp_gap_imitation} shows that under a fixed tie-breaking rule, arbitrarily small score discrepancies can lead to an exponential gap in tree size.
The next proposition shows that even when the scoring function is held fixed, changing only the tie-breaking rule can still lead to an exponential gap.
\begin{proposition}[Exponential gap from tie-breaking under identical scores]\label{prop:tiebreak_instability}
	There exists a family of $0$--$1$ MILP instances $\{I_n\}_{n \ge 3}$, where each $I_n$ has $O(n)$ variables, and a scoring function $\Score(N,j)$ such that the following holds under the standing assumptions.
	There exist two score-based branching policies $\pi_{\min}$ and $\pi_y$ that use the same scoring function $\Score$ and differ only in their tie-breaking rule among maximizers.
	For every $n\ge 3$,
	\[
		|\Tcal_{\pi_{\min}}(I_n)| = 2n+1
		\qquad\text{and}\qquad
		|\Tcal_{\pi_y}(I_n)| \ge 2^{n+1}-1.
	\]
	The construction and proof appear in Section~\ref{app:tiebreak_instability}.
\end{proposition}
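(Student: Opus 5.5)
We construct $I_n$ by combining a ``good'' part, on which branching certifies optimality quickly, with a ``bad'' product part to which \cref{lem:product_gadget} applies. The two parts are designed so that every candidate variable at every relevant node receives the same score; the only difference between the policies is then which maximizer the tie-breaking rule returns.

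\textbf{Construction.} Take binary variables $y_1,\dots,y_n$ together with $n$ disjoint triangle blocks $\x_t=(x_{t,1},x_{t,2},x_{t,3})$. The triangle blocks form the stable set gadget used in \cref{thm:cut_stability_gap}: $B=\{\x\in[0,1]^3: x_i+x_j\le 1\}$ with $\w=\mathbf{1}$. Its LP optimum is $3/2$, its integer optimum is $\beta=1$, and every coordinate takes both values $0$ and $1$ among optimal integer points, so \cref{lem:product_gadget} applies.

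Couple the $y$-chain to the blocks through the objective, and add one constraint per block (for example, $x_{t,1}+x_{t,2}+x_{t,3}\le 1+ (n-\sum_s y_s)$, or a single aggregated big-$M$ constraint) so that fixing the $y$'s appropriately makes the triangle LP integral. The $y$-structure should be a Jeroslow-type or chain gadget in which branching on $y_1,y_2,\dots$ sequentially has one child pruned at each depth. This gives a path tree with $2n+1$ nodes: every $y$-branch prunes one side, and at the end the triangle relaxations become tight and integral.

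\textbf{Scoring function.} Define the scoring function as a constant, for instance $\Score\equiv 1$, or any score that is identical across all candidates at the nodes we visit. Let $\pi_{\min}$ break ties by choosing the smallest index, with the $y$'s ordered first. Let $\pi_y$ break ties by preferring $x$-variables, so that it never branches on a $y$ while some $x_{t,i}$ is fractional. Since a constant score ties everything, the policies genuinely differ only in tie-breaking, as the statement requires.

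\textbf{Analysis.}

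\emph{Upper bound.} For $\pi_{\min}$, I would verify inductively on depth that the node LP optimum keeps the next $y_k$ fractional, so the branching is allowed. One child is infeasible or has bound at most the incumbent, and the other continues the chain. At depth $n$ the LP is integral. This count gives exactly $2n+1$ nodes.

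\emph{Lower bound.} For $\pi_y$, the aim is to show that while the triangle blocks are not fully resolved, the optimal LP face keeps the $y$'s in a position that does not help the blocks. Branching restricted to the $x$-variables on the product part then satisfies the hypotheses of \cref{lem:product_gadget}, which yields at least $2^{n+1}-1$ nodes. To make this rigorous, I would argue by projection: every node of the $\pi_y$ tree projects to a node of a variable-disjunction tree for the pure product instance, and the pruning in that projected tree is no stronger than before, using an incumbent of value equal to the optimum.

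\textbf{Main obstacle.} The hard part is the coupling. The $y$-chain must genuinely shortcut the triangles, yet the LP solutions must keep some $x$ fractional, so that $\pi_y$ never reaches a $y$, until the product lower bound has been forced. I would first try making the objective weight on the $y$'s tiny, so that their LP values do not affect which $x$'s are fractional. I would then check that best-bound node selection does not let $\pi_y$ prune early.
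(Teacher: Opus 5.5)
\paragraph{Where the proposal breaks.}
Using a constant score and letting $\pi_y$ prefer $x$-variables is legitimate under \cref{def:score_based_policy}, because the tie-breaking rule only sees the argmax set. The gap is exactly the step you call the main obstacle. No instance is ever fixed, the couplings you suggest cannot meet both requirements, and your proposed remedy (a tiny weight on the $y$'s) goes the wrong way.

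Take your constraint $x_{t,1}+x_{t,2}+x_{t,3}\le 1+(n-\sum_s y_s)$ with weight $c$ on $\sum_s y_s$, so $\mathrm{OPT}=n+cn$.
\begin{itemize}
\item For $\pi_{\min}$ to give exactly $2n+1$ nodes, every sibling $y_k=0$ must be pruned by bound. Under best bound with no incumbent, it must also never be selected before the chain ends. But $y_k=0$ makes the right-hand side at least $2$ in every block, so that sibling has LP value $3n/2+c(n-1)$. This is at most $\mathrm{OPT}$ only if $c\ge n/2$. With a tiny $c$ these siblings can never be pruned, and the $\pi_{\min}$ tree exceeds $2n+1$.
\item Once $c\ge n/2$, a node of the $\pi_y$ tree with $u$ untouched triangles has LP value $n+cn+\max\{0,(u-c)/2\}$, since the LP gives up only $c/2$ by setting $\sum_s y_s=n-\tfrac12$. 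Such a node is pruned as soon as $u\le c$. So \cref{lem:product_gadget} can only be invoked on about $n-c\le n/2$ triangles, which certifies roughly $2^{n/2}$ nodes, not $2^{n+1}-1$.
\item The projection step has a further hole. The lemma uses condition (ii) for the coordinate branched at each subtree root, and (ii) fails for the shortcut variables: every optimal integer point sets them to $1$, which is exactly why branching on them is cheap. You would need to prove that $\pi_y$ never branches on a $y$, and restate the lemma for trees branching only on $x$-coordinates.
\end{itemize}

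\paragraph{The missing idea.}
What is missing is a per-block calibration of the two tied branchings.
\begin{itemize}
\item The ``resolving'' branching needs one child that loses little bound and one child that loses more than the slack accumulated along the whole chain.
\item The branching that $\pi_y$ prefers must lose strictly less per level than the per-block LP gap.
\end{itemize}
The paper does this on the instance of \cref{def:InK} with $\Score=\min\{\Score_{\mathrm{SB}},\kappa\}$ and $\kappa\le 9$, so that $b_i$ and $y_{i,1}$ tie at every node with a free block.
\begin{itemize}
\item Branching on $b_i$ lowers the bound by $7$ and $M-7$, with $M=7n+8$. Best bound therefore follows $b_1=\dots=b_n=0$ and prunes every sibling.
\item Branching on $y_{i,1}$ lowers the bound by $3$ on both sides. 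After $d\le n$ levels the value $n(M+27)-3d$ still exceeds $\mathrm{OPT}=n(M+20)$, so the full depth-$n$ binary tree follows directly from \cref{lem:block_values}, without \cref{lem:product_gadget}.
\end{itemize}

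\paragraph{A repair of your design.}
Your construction can be fixed in the same spirit by giving each triangle its own shortcut: use $x_{t,1}+x_{t,2}+x_{t,3}+y_t\le 2$ with weight $c$ on each $y_t$ and $n/(n+1)<c<1$.
\begin{itemize}
\item The chain loses $(1-c)/2$ per level.
\item Each wrong sibling drops $c/2$ below its parent, which exceeds the total chain loss $n(1-c)/2$.
\item Every $\pi_y$ node with an untouched triangle stays strictly above $\mathrm{OPT}$.
\end{itemize}
You would still have to handle the degenerate optimal face $\sum_i x_{t,i}=1$ at the last chain node. The key point is that the weight must be close to $1$, not tiny.
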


\subsection{Sensitivity to Sparse Deviations}

While \cref{thm:exp_gap_imitation} shows that small, persistent differences in scoring can lead to exponential gaps, it is also important to understand the impact of a small number of decision differences from a baseline policy. \citet{ye2023gnn} mention the sensitivity of B\&B to branching decisions, remarking that ``one wrong decision may cause a doubled tree size.'' In imitation learning, a standard way to measure deviations is along the trajectory induced by an expert: one runs strong branching and counts how often the learned policy makes a different branching choice on the internal nodes of that run.
The next theorem shows that even this restricted notion of deviation does not control the resulting tree size: $k$ deviations can already lead to exponential (in $k$) blowup in size. To state this precisely, we first define what it means for a policy to differ from strong branching along its trajectory.

\begin{definition}[Deviations along the strong branching run]\label{def:k_dev_along_sb}
	Fix an instance $I$.
	For any branching policy $\pi$, let $\U_{\pi}(I)$ denote the set of internal nodes of $\Tcal_{\pi}(I)$.
	For any B\&B node $N$, write $\pi(N)$ for the branching variable selected by $\pi$ at $N$.
	We say that $\pi$ differs from $\pi_{\mathrm{SB}}$ on exactly $k$ branching decisions along the strong branching run on $I$ if
	\[
		\bigl|\set{N\in\U_{\pi_{\mathrm{SB}}}(I): \pi(N)\neq \pi_{\mathrm{SB}}(N)}\bigr|=k.
	\]
\end{definition}

\begin{theorem}[Exponential growth from $k$ deviations]\label{thm:k_deviation_linear_gap}
		For every $n\in\N$, there exists a packing instance $I_n$ such that for every $k\in\set{0,1,\dots,n}$ there exists a branching policy $\hat\pi_{n,k}$ satisfying the following under the standing assumptions.
			\begin{enumerate}
				\item Policy $\hat\pi_{n,k}$ differs from $\pi_{\mathrm{SB}}$ on exactly $k$ branching decisions along the strong branching run on $I_n$.
				\item The resulting tree sizes satisfy
				      \[
					      |\Tcal_{\pi_{\mathrm{SB}}}(I_n)| = 2n+1
					      \qquad\text{and}\qquad
					      |\Tcal_{\hat\pi_{n,k}}(I_n)| \ge 2^{k+1}\,\frac{n}{7}.
				      \]
				      Therefore $|\Tcal_{\hat\pi_{n,k}}(I_n)| \ge 2^{\Omega(k)}|\Tcal_{\pi_{\mathrm{SB}}}(I_n)|$.
			\end{enumerate}
	\end{theorem}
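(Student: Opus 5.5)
We build $I_n$ as a packing instance combining a ``decoy chain'' with product gadgets, reusing the ingredients from \cref{thm:exp_gap_imitation}, \cref{prop:tiebreak_instability}, and the triangle gadget of \cref{lem:product_gadget}. Concretely, take $n$ disjoint triangles (stable set constraints $x_{t,1}+x_{t,2}+x_{t,3}\le 1$ omitted, only edge constraints $x_{t,i}+x_{t,j}\le 1$) with variables $x_{t,i}$. Add a chain of ``key'' binary variables $y_1,\dots,y_n$ coupled to the blocks by packing constraints, for instance $x_{t,i}+x_{t,j}+y_t\le 1$ and $y_t$ of slightly larger weight than $1/2$ per triangle. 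The weights are chosen so that:
\begin{itemize}
\item[] (a) at every node on the SB trajectory, branching on $y_t$ has strictly the largest SB product score;
\item[] (b) setting $y_t=1$ kills block $t$ and the node becomes integral or prunable;
\item[] (c) setting $y_t=0$ leaves the remaining blocks with the same structure.
\end{itemize}
The SB tree is then a caterpillar: each of the $n$ internal nodes has one leaf child and one child continuing, giving exactly $2n+1$ nodes.

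\textbf{Defining the policy.} Fix $k$. Let $\hat\pi_{n,k}$ agree with $\pi_{\mathrm{SB}}$ except at the first $k$ internal nodes of the SB caterpillar, where it branches instead on a triangle variable $x_{t,1}$ of the currently active block. At any node not on the SB run, $\hat\pi_{n,k}$ follows SB; such nodes do not count toward deviations by \cref{def:k_dev_along_sb}, so exactly $k$ deviations are counted. The design must guarantee the following: once a triangle variable is branched on before $y_t$ is fixed, both children retain an optimal integer solution with value equal to the incumbent and LP value strictly above it. In that case neither child can be pruned.

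\textbf{Lower bound.} We would prove by induction on $k$ that each deviation doubles the number of surviving subtrees, each still containing the remaining caterpillar structure. Choose the deviations at depths $1,\dots,k$ along the SB path. Each deviation node then spawns two unpruned subtrees. In the subtree not containing the SB path continuation, SB itself must still process the residual $\Theta(n-k)$-length caterpillar, or a product gadget bounded below via the argument of \cref{lem:product_gadget}.

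To reach the stated bound $2^{k+1}n/7$, we combine two ingredients. The first is $2^{k+1}-1$ unpruned nodes from the deviation region, mirroring \cref{lem:product_gadget}. The second is that each of the roughly $2^k$ frontier nodes still carries a caterpillar of length at least about $n/7$. The constant $7$ would come from grouping variables into blocks of $O(1)$ size, e.g.\ $7$ variables per key-plus-gadget unit. For $k=n$ we would instead let the deviations spread so the product lower bound alone gives $2^{n+1}-1\ge 2^{n+1}n/7$ only up to constants, which is why blocks of size about $7$ are needed.

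\textbf{Main obstacle.} The main obstacle is the score design. We need SB to strictly prefer $y_t$ at every node of its own run. We also need that after triangle branchings, SB at off-trajectory nodes does not quickly rescue the tree by branching on a $y$ and pruning. The worry is that SB could prune the ``$x=1$'' side by forcing $y_t=0$ with no LP loss.

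I would first try making $y_t$'s value equal, at the integer level, to that of the triangle, so that both $y$-sides keep optimal solutions once a triangle variable has been fixed. This keeps subtrees alive. Carefully verifying the LP values at each node type is the delicate computation.
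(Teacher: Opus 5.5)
There is a genuine gap, and it lies in how you define $\hat\pi_{n,k}$ rather than in the (still unspecified) instance. You let $\hat\pi_{n,k}$ deviate only at the first $k$ internal nodes of the strong branching run and follow $\pi_{\mathrm{SB}}$ at every other node. B\&B nodes, however, are subproblems determined by their fixings. Whichever run node carries your first deviation, once $\hat\pi_{n,k}$ branches there on a triangle variable instead of on $y_t$, every later node of its own tree has that triangle variable fixed. The later run nodes carry only $y$-fixings, so they never appear in $\Tcal_{\hat\pi_{n,k}}(I_n)$. Deviations $2,\dots,k$ are therefore never executed. The tree of $\hat\pi_{n,k}$ is the same for every $k\ge 1$: one deviating branching with two subtrees grown by $\pi_{\mathrm{SB}}$ below it. So no factor $2^k$ can arise. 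Your ``each deviation doubles'' induction implicitly deviates at off-run nodes, which your own definition rules out.

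The missing idea is that \cref{def:k_dev_along_sb} counts disagreements only on $\U_{\pi_{\mathrm{SB}}}(I_n)$, so the policy is free to deviate elsewhere. The paper sets $\hat\pi_{n,k}(N)=y_{\mathrm{Fix}(N)+1,1}$ whenever $\mathrm{Fix}(N)<k$, and $\hat\pi_{n,k}(N)=\pi_{\mathrm{SB}}(N)$ otherwise. This policy disagrees with $\pi_{\mathrm{SB}}$ at all $2^k-1$ nodes of depth less than $k$ in its own tree. That tree is a full binary tree, since each $y_{i,1}$-fixing lowers the bound by only $3$. Only $N_0,\dots,N_{k-1}$ lie on the SB run, so exactly $k$ deviations are counted.

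The instance sketch also fails as written. Take $x_{t,i}+x_{t,j}+y_t\le 1$ with weight $w$ slightly above $\tfrac12$ on $y_t$. Summing the three constraints bounds the block LP value by $\tfrac32+(w-\tfrac32)y_t$, so the unique block LP optimum has $y_t=0$. Then $y_t$ is integral and cannot be branched on, contradicting (a). Moreover, the child $y_t=1$ would have bound $\tfrac{3n}{2}-\tfrac32+w>n=\mathrm{OPT}$ for $n\ge 2$, contradicting (b). A caterpillar of size exactly $2n+1$ needs one side of each SB branching to lose more than the total root slack. The paper gets this from a heavy variable $p_i$ of weight $M=7n+8$ coupled to $b_i$ by $2b_i+p_i\le 2$. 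This keeps $b_i=\tfrac12$ fractional, while the $b_i=1$ child loses $M-7>7n$.

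You have also misidentified the source of the constant: $n/7$ does not come from blocks of size $7$. Each frontier node $N_\sigma$ has bound $\mathrm{OPT}+7n-3k\ge \mathrm{OPT}+4n$. Along the path below it that never takes a $b_i=1$ or $p_i=0$ child, the bound drops by at most $27$ per branching. Hence at least $\lfloor n/7\rfloor+1$ unprunable, non-integral internal nodes lie below each of the $2^k$ frontier nodes, giving $2^k\cdot 2n/7$. This slack argument covers $k=n$ uniformly. Your fallback for $k=n$ cannot work, since $2^{n+1}-1<2^{n+1}n/7$ for every $n\ge 7$.
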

The construction and proof appear in Section~\ref{app:branching_proofs}.

Definition~\ref{def:k_dev_along_sb} counts deviations on the internal nodes $\U_{\pi_{\mathrm{SB}}}(I)$ visited by the strong branching rollout.
This matches the standard offline imitation learning viewpoint, where both supervision and offline evaluation are taken under the expert induced node distribution.
From a solver perspective, it is also natural to count disagreements on the internal nodes generated by the policy itself.
Since $\pi_{\mathrm{SB}}$ is a score based branching policy as in \cref{def:score_based_policy}, the decision $\pi_{\mathrm{SB}}(N)$ is defined for every internal node $N$, regardless of whether $N$ lies on the rollout of $\pi_{\mathrm{SB}}$.
For an instance $I$ and a branching policy $\pi$, we count deviations in this sense by
\[
	\left|\set{N\in\U_{\pi}(I): \pi(N)\neq \pi_{\mathrm{SB}}(N)}\right|.
\]
For the instance $I_n$, the construction in Section~\ref{app:branching_proofs} can be adapted so that for every $k'\in\set{0,1,\dots,2^n-1}$ there exists a policy $\hat\pi_{n,k'}$ satisfying
\[
	k'=\left|\set{N\in\U_{\hat\pi_{n,k'}}(I_n): \hat\pi_{n,k'}(N)\neq \pi_{\mathrm{SB}}(N)}\right|
\]
and
\[
	|\Tcal_{\hat\pi_{n,k'}}(I_n)|
	\ge
	\Omega\!\left(k'\right)\,
	|\Tcal_{\pi_{\mathrm{SB}}}(I_n)|.
\]

\section{Proofs of the Main Results}\label{sec:proofs}

\subsection{Proofs and Examples for Cutting Plane Selection}
\label{app:cutting_proofs}

\subsubsection{A Mixed-Integer Example with Two Variables}
\label{app:toy_cut_mismatch}
We give an example in which LP bound improvement and standard proxy scores rank two valid cuts in opposite orders, even at the root. Consider the mixed-integer program
\begin{align*}
	\max\        & -y                    \\
	\text{s.t. } & x + y = s             \\
	             & x \in \Z_+,\ y \in \R_+,
\end{align*}
where $s\in(1,2)$. The LP relaxation (dropping integrality of $x$) has the unique optimum $(x,y)=(s,0)$ with value $0$, since the objective maximizes $-y$ subject to $y\ge 0$.
Moreover, $y\ge 0$ implies $x\le s<2$, so the mixed-integer feasible set consists of the two points $(0,s)$ and $(1,s-1)$.

Fix $a,b>0$ and consider an inequality of the form
\[
	-ax-by\le -1.
\]
Requiring validity for the two mixed-integer points yields
\[
	bs\ge 1
	\qquad\text{and}\qquad
	a+b(s-1)\ge 1,
\]
while requiring violation at the root LP optimum $(s,0)$ yields $as<1$.
In particular, $bs\ge 1$ and $as<1$ imply $b>a$.
For such a cut, the efficacy and objective parallelism at the root are
\[
	\phi_{\text{eff}}(s,a,b)=\frac{1-sa}{\sqrt{a^2+b^2}},
	\qquad
	\phi_{\text{par}}(a,b)=\frac{b}{\sqrt{a^2+b^2}}.
\]

To compute the LP bound improvement, eliminate $y$ using $y=s-x$. The cut becomes
\[
	-ax-b(s-x)\le -1
	\qquad\Longleftrightarrow\qquad
	(b-a)x\le bs-1.
\]
Since $b>a$, this is equivalent to a lower bound on $y$,
\[
	y=s-x \ge \Delta(s,a,b),
	\qquad
	\Delta(s,a,b)=\frac{1-sa}{b-a}.
\]
Thus the root LP value decreases from $0$ to $-\Delta(s,a,b)$, and the LP bound improvement equals $\Delta(s,a,b)$.

We now fix $s=3/2$ and compare two valid cuts:
\[
	\text{(cut 1)}\quad -\tfrac12 x-y\le -1,
	\qquad
	\text{(cut 2)}\quad -\tfrac13 x-2y\le -1.
\]
Both inequalities are satisfied by $(0,3/2)$ and $(1,1/2)$ and are violated by the root LP optimum $(3/2,0)$.
A direct calculation gives $\Delta_1=\tfrac12$ and $\Delta_2=\tfrac{3}{10}$, so LP bound improvement ranks cut~1 above cut~2.
In contrast, cut~2 has larger values for both proxy scores:
\[
	\phi_{\text{eff},1}=\frac{1}{2\sqrt{5}}<\frac{3}{2\sqrt{37}}=\phi_{\text{eff},2},
	\qquad
	\phi_{\text{par},1}=\frac{2}{\sqrt{5}}<\frac{6}{\sqrt{37}}=\phi_{\text{par},2}.
\]
Therefore, for any $\lambda\in[0,1]$, the proxy score $\lambda\phi_{\text{eff}}+(1-\lambda)\phi_{\text{par}}$ ranks cut~2 above cut~1, while LP bound improvement ranks cut~1 above cut~2.

\subsubsection{Proof of \cref{thm:global_cut_eff_vs_improve}}
\label{app:global_cut_eff_vs_improve}

This example is based on the two-dimensional gadget from \citet{shah2025non}.

\begin{proof}[of \cref{thm:global_cut_eff_vs_improve}]
	Let
	\[
		P=\set{(x,y)\in[0,1]^2 : -7x+y\le 0.3,\ 5x+8y\le 8.5,\ 3x+2y\le 3.7}
	\]
	and let $\c=(6,5)\in\R^2$.
	For $m\in\N$, consider the $0$--$1$ MILP $I_m$ with binary variables $(x_i,y_i)\in\set{0,1}^2$ for $i\in[m]$:
	\begin{align*}
		\max\quad        & \sum_{i=1}^m (6x_i+5y_i)       \\
		\text{s.t.}\quad & (x_i,y_i)\in P \qquad i\in[m].
	\end{align*}
	We refer to the pair $(x_i,y_i)$ and the constraint $(x_i,y_i)\in P$ as block $i$.
	Define the root cut sets
	\[
		\Ccal_1=\set{20y_i-7x_i\le 0 : i\in[m]},
		\qquad
		\Ccal_2=\set{13x_i+10y_i\le 14 : i\in[m]}.
	\]
	A direct check shows that $P\cap\set{0,1}^2=\set{(0,0),(1,0)}$, and both inequalities are satisfied by these two points.
	Hence every cut in $\Ccal_1\cup\Ccal_2$ is valid for $I_m$.
	Moreover, the root LP optimum in each block is $A=(0.9,0.5)$, and it violates both inequalities, so both cut sets are cutting planes at the root.
		We begin with the polytope $P$ for a single block in variables $(x,y)$.
		A direct enumeration of feasible intersections of the defining inequalities shows that $P$ has vertices
		\[
			F=(0,0),\ E=(0,0.3),\ D=(0.1,1),\ C=(1,0),\ B=(1,0.35),\ A=(0.9,0.5).
		\]
		\Cref{fig:global_cut_polytope} depicts $P$ together with the two blockwise cuts from $\Ccal_1$ and $\Ccal_2$.
		Their objective values under $\c=(6,5)$ are
		\[
			\c^\top A=\frac{79}{10},\quad \c^\top B=\frac{31}{4},\quad \c^\top C=6,\quad \c^\top D=\frac{28}{5},\quad \c^\top E=\frac{3}{2},\quad \c^\top F=0,
		\]
	so the LP relaxation of a single block is uniquely optimized at $A$.

	\begin{figure}[htbp]
		\centering
		\begin{tikzpicture}[x=4cm,y=4cm]
			\draw[->] (-0.05,0) -- (1.08,0) node[right] {$x$};
			\draw[->] (0,-0.05) -- (0,1.08) node[above] {$y$};

			\coordinate (F) at (0,0);
			\coordinate (E) at (0,0.3);
			\coordinate (D) at (0.1,1);
			\coordinate (C) at (1,0);
			\coordinate (B) at (1,0.35);
			\coordinate (A) at (0.9,0.5);
			\coordinate (Aprime) at (0.5,0.75);
			\coordinate (Bprime) at (1,0.1);
			\coordinate (AprojFB) at ($(F)!(A)!(B)$);
			\coordinate (AprojAprimeBprime) at ($(Aprime)!(A)!(Bprime)$);

			\fill[blue!10] (F) -- (C) -- (B) -- (A) -- (D) -- (E) -- cycle;
			\draw[thick] (F) -- (C) -- (B) -- (A) -- (D) -- (E) -- cycle;

			\draw[very thick, densely dashed, color=red!80!black] (F) -- (B);
			\node[font=\small, text=red!80!black, inner sep=1pt] at (0.30,0.23) {$20y-7x=0$};

			\draw[very thick, dash dot, color=blue!80!black] (Aprime) -- (Bprime);
			\node[font=\small, text=blue!80!black, inner sep=1pt] at (0.62,0.92) {$13x+10y=14$};

			\draw[very thick, dotted, line cap=round, color=red!80!black]
				(A) -- node[pos=0.33, above right, xshift=5pt, yshift=-5pt, font=\scriptsize, text=red!80!black, inner sep=0.5pt]
				{$0.175$} (AprojFB);
			\draw[very thick, dotted, line cap=round, color=blue!80!black]
				(A) -- node[pos=0.50, left, xshift=4pt, yshift=5pt, font=\scriptsize, text=blue!80!black, inner sep=0.5pt]
				{$0.165$} (AprojAprimeBprime);

			\coordinate (AprojFBa) at ($(AprojFB)!0.12!(A)$);
			\coordinate (AprojFBb) at ($(AprojFB)!0.02!(F)$);
			\coordinate (AprojFBc) at ($ (AprojFBa) + (AprojFBb) - (AprojFB) $);
			\draw[very thick, color=red!80!black] (AprojFBa) -- (AprojFBc) -- (AprojFBb);

			\coordinate (AprojAprimeBprimea) at ($(AprojAprimeBprime)!0.12!(A)$);
			\coordinate (AprojAprimeBprimeb) at ($(AprojAprimeBprime)!0.05!(Bprime)$);
			\coordinate (AprojAprimeBprimec) at ($ (AprojAprimeBprimea) + (AprojAprimeBprimeb) - (AprojAprimeBprime) $);
			\draw[very thick, color=blue!80!black] (AprojAprimeBprimea) -- (AprojAprimeBprimec) -- (AprojAprimeBprimeb);

			\draw[->, gray!70!black, thick] (0.88,0.76) -- (1.07,0.95) node[above right] {$\c$};

			\fill (F) circle (0.6pt) node[below left] {$F$};
			\fill (E) circle (0.6pt) node[left] {$E$};
			\fill (D) circle (0.6pt) node[above] {$D$};
			\fill (C) circle (0.6pt) node[below] {$C$};
			\fill (B) circle (0.6pt) node[right] {$B$};
				\node[star, star points=5, star point ratio=2.25, fill=white, draw=black, line width=0.4pt, minimum size=7.5pt, inner sep=0pt, label=right:{$A\ (\text{LP optimum})$}] at (A) {};

			\fill (Aprime) circle (0.6pt) node[above] {$A'$};
			\fill (Bprime) circle (0.6pt) node[right] {$B'$};
		\end{tikzpicture}
			\caption{The polytope $P$ for one block and the two cuts induced by $\Ccal_1$ and $\Ccal_2$. The dotted segments from $A$ to the cut lines are perpendicular, and their lengths equal the corresponding efficacies at $A$.}
			\label{fig:global_cut_polytope}
		\end{figure}
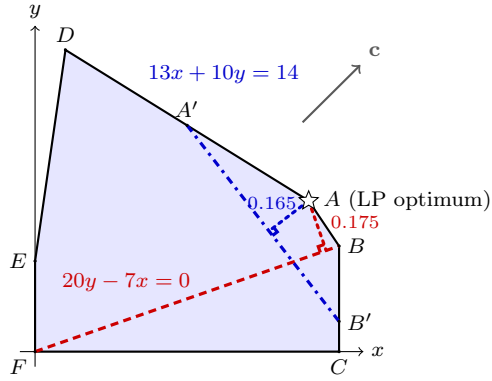

	We first compute the root LP value.
	In the root LP relaxation of $I_m$, each block can attain $A$, hence $z_{\LP}(I_m)=m\cdot \frac{79}{10}$.

	We now compare LP bound improvements for individual cuts at the root.
	Fix any $i\in[m]$ and let $c_{2,i}$ denote the cut $13x_i+10y_i\le 14$ and $c_{1,i}$ denote the cut $20y_i-7x_i\le 0$.
	Since the instance decomposes across blocks, adding one cut affects only the corresponding block.

	For $c_{2,i}$, the affected block has feasible region $P\cap\set{13x+10y\le 14}$ and a unique optimum at
	\[
		A'=(0.5,0.75),
		\qquad
		\c^\top A'=\frac{27}{4},
	\]
	since $A'$ is the intersection of $13x+10y=14$ and $5x+8y=8.5$, and $\frac{27}{4}$ dominates the objective values at the other vertices of this intersection, including $B'=(1,0.1)$ with value $\frac{13}{2}$.
	Thus $z_{\LP}(I_m;\set{c_{2,i}})=(m-1)\cdot \frac{79}{10}+\frac{27}{4}$ and the LP bound improvement equals $\frac{79}{10}-\frac{27}{4}=\frac{23}{20}$.

	For $c_{1,i}$, on $[0,1]^2$ the inequality $20y-7x\le 0$ is equivalent to $y\le 0.35x$, and together with $3x+2y\le 3.7$ it yields the triangle $\conv\set{F,C,B}$.
	Since $\c^\top B=\frac{31}{4}$ dominates $\c^\top C$ and $\c^\top F$, the unique block optimum is $B$.
	Thus $z_{\LP}(I_m;\set{c_{1,i}})=(m-1)\cdot \frac{79}{10}+\frac{31}{4}$ and the LP bound improvement equals $\frac{79}{10}-\frac{31}{4}=\frac{3}{20}$.

	Therefore every cut in $\Ccal_2$ has strictly larger LP bound improvement than every cut in $\Ccal_1$, and selecting the $m$ cuts with the largest LP bound improvements returns $\Ccal_2$. Adding all $m$ cuts from $\Ccal_2$ yields $z_{\LP}(I_m;\Ccal_2)=m\cdot \frac{27}{4}$ and $z_{\LP}(I_m)-z_{\LP}(I_m;\Ccal_2)=\frac{23m}{20}$.
	Adding all $m$ cuts from $\Ccal_1$ yields $z_{\LP}(I_m;\Ccal_1)=m\cdot \frac{31}{4}$ and $z_{\LP}(I_m)-z_{\LP}(I_m;\Ccal_1)=\frac{3m}{20}$.

		For efficacy, we use $\phi_{\text{eff}}$ from the main text and evaluate it at the root LP optimum $A=(0.9,0.5)$ of a free block.
		For any cut $c\in\Ccal_2$, the violation at $A$ is $13\cdot 0.9+10\cdot 0.5-14=\frac{27}{10}$ and the normal has Euclidean norm $\sqrt{13^2+10^2}=\sqrt{269}$, so $\phi_{\text{eff}}(c)=\frac{27}{10\sqrt{269}}$.
		For any cut $c\in\Ccal_1$, the violation at $A$ is $20\cdot 0.5-7\cdot 0.9=\frac{37}{10}$ and the normal has Euclidean norm $\sqrt{(-7)^2+20^2}=\sqrt{449}$, so $\phi_{\text{eff}}(c)=\frac{37}{10\sqrt{449}}$.
		Since $\frac{37}{10\sqrt{449}}>\frac{27}{10\sqrt{269}}$, every cut in $\Ccal_1$ has strictly larger efficacy than every cut in $\Ccal_2$, and selecting the $m$ cuts with the largest efficacy values returns $\Ccal_1$.

	We now analyze the tree size after adding $\Ccal_1$.
	Under $\Ccal_1$, each unfixed block has unique LP optimum $B=(1,0.35)$, so $x_i$ is integral and $y_i$ is fractional.
	Strong branching considers only fractional binary variables, so it branches on some $y_i$ from an unfixed block.
	The child $y_i=1$ is infeasible since $y_i\le 0.35x_i\le 0.35$.
	The feasible child $y_i=0$ has block optimum $C=(1,0)$, which is integral.
	Thus each branching fixes one additional block integrally and creates one infeasible sibling.
	The search tree is a chain of $m$ branchings with $m$ infeasible siblings, so $|\Tcal_{\pi_{\mathrm{SB}}}(I_m;\Ccal_1)|\le 2m+1$.

	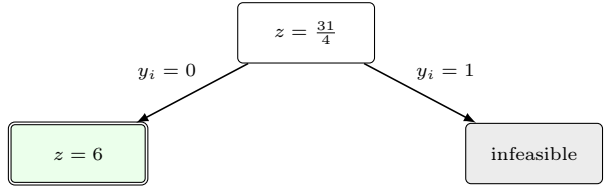
\begin{figure}[htbp]
		\centering
		\begin{tikzpicture}[
				x=1cm,y=1cm,
				scale=1.0, transform shape,
				bbnode/.style={rectangle,draw,rounded corners=2pt,minimum width=18mm,minimum height=8mm,inner sep=1pt,font=\scriptsize,align=center},
				infeas/.style={bbnode,fill=gray!15},
				integral/.style={bbnode,double,fill=green!8},
				edge/.style={-latex,semithick},
				lab/.style={font=\scriptsize,inner sep=1pt,fill=white,fill opacity=0.9,text opacity=1}
			]
			\node[bbnode] (r) at (0,0) {$z=\frac{31}{4}$};

			\node[integral] (y0) at (-3.0,-1.6) {$z=6$};
			\node[infeas] (y1) at (3.0,-1.6) {infeasible};

			\draw[edge] (r) -- node[lab,pos=0.4,above left=2pt] {$y_i=0$} (y0);
			\draw[edge] (r) -- node[lab,pos=0.4,above right=2pt] {$y_i=1$} (y1);
		\end{tikzpicture}
			\caption{The gadget under $\Ccal_1$ in one block. Here $z$ denotes the block LP value. Branching on $y_i$ yields one integral child and one infeasible child.}
			\label{fig:fb_one_level_gadget}
		\end{figure}

	We now analyze the tree size after adding $\Ccal_2$.
	We follow the argument in the proof of Theorem~2 in \citet{shah2025non} and include the main steps here so that the paper is self contained.

	We use the strong branching notation from \cref{sec:prelim}.
	At a node $N$, let $z(N)$ denote its LP value.
	For an index $j$ that is fractional in an optimal LP solution at $N$, let $z^{(0)}(N,j)$ and $z^{(1)}(N,j)$ denote the child LP values obtained by fixing $x_j=0$ and $x_j=1$.
	Recall $\Delta^{(0)}(N,j)=z(N)-z^{(0)}(N,j)$ and $\Delta^{(1)}(N,j)=z(N)-z^{(1)}(N,j)$, and the strong branching product score
	\[
		\Score_{\mathrm{SB}}(N,j)=\max\set{\Delta^{(0)}(N,j),\eta_{\mathrm{SB}}}\cdot \max\set{\Delta^{(1)}(N,j),\eta_{\mathrm{SB}}}.
	\]

	Consider an unfixed block $i$.
	Under $\Ccal_2$, this block has unique LP optimum $A'=(0.5,0.75)$, so both $x_i$ and $y_i$ are fractional.
	We compare strong branching scores for branching on $x_i$ and $y_i$ using \eqref{eq:sb_score}.
	We break ties by the smallest index, and we index variables so that $x_i$ precedes $y_i$ within each block.

	If we branch on $x_i$, then in the child $x_i=0$ the block optimum is $E=(0,0.3)$ with value $\frac{3}{2}$, and in the child $x_i=1$ the block optimum is $B'=(1,0.1)$ with value $\frac{13}{2}$.
	The two improvements are $\frac{27}{4}-\frac{3}{2}=\frac{21}{4}$ and $\frac{27}{4}-\frac{13}{2}=\frac{1}{4}$, so the score in \eqref{eq:sb_score} equals
	\[
		\max\left(\frac{21}{4},\eta_{\mathrm{SB}}\right)\cdot \max\left(\frac{1}{4},\eta_{\mathrm{SB}}\right).
	\]

	If we branch on $y_i$, then in the child $y_i=0$ the block optimum is $C=(1,0)$ with value $6$, and in the child $y_i=1$ the block optimum is $D=(0.1,1)$ with value $\frac{28}{5}$.
	The two improvements are $\frac{27}{4}-6=\frac{3}{4}$ and $\frac{27}{4}-\frac{28}{5}=\frac{23}{20}$, so the score in \eqref{eq:sb_score} equals
	\[
		\max\left(\frac{3}{4},\eta_{\mathrm{SB}}\right)\cdot \max\left(\frac{23}{20},\eta_{\mathrm{SB}}\right).
	\]

	If $\eta_{\mathrm{SB}}<\frac{3}{4}$, then the score for $y_i$ equals $\frac{3}{4}\cdot\frac{23}{20}=\frac{69}{80}$, while the score for $x_i$ is at least $\frac{21}{4}\cdot\frac{1}{4}=\frac{21}{16}$.
	If $\frac{3}{4}\le \eta_{\mathrm{SB}}<\frac{21}{4}$, then the score for $x_i$ equals $\frac{21}{4}\eta_{\mathrm{SB}}$, while the score for $y_i$ equals $\eta_{\mathrm{SB}}\max\left(\frac{23}{20},\eta_{\mathrm{SB}}\right)<\frac{21}{4}\eta_{\mathrm{SB}}$.
	Therefore the score for $x_i$ is strictly larger whenever $\eta_{\mathrm{SB}}<\frac{21}{4}$.
	When $\eta_{\mathrm{SB}}\ge \frac{21}{4}$, the two scores tie and both equal $\eta_{\mathrm{SB}}^2$, and the smallest index among the maximizers corresponds to an $x_i$ from an unfixed block.
	Therefore, in any node with at least one unfixed block, strong branching selects some $x_i$ from an unfixed block.
	In either child $x_i=0$ or $x_i=1$, the LP optimum has $y_i$ fractional and the branch $y_i=1$ is infeasible, since $y_i\le 0.3$ when $x_i=0$ and $y_i\le 0.1$ when $x_i=1$.
	Thus the strong branching score of $y_i$ is infinite in either child, and the next branching is on $y_i$.
	After branching on $x_i$ and then on $y_i$ in block $i$, exactly two grandchildren are feasible, namely $(x_i,y_i)=(0,0)$ and $(x_i,y_i)=(1,0)$, and both are integral for that block.

		\begin{figure}[htbp]
			\centering
			\begin{tikzpicture}[
					x=1cm,y=1cm,
					scale=1.0, transform shape,
					bbnode/.style={rectangle,draw,rounded corners=2pt,minimum width=18mm,minimum height=8mm,inner sep=1pt,font=\scriptsize,align=center},
					infeas/.style={bbnode,fill=gray!15},
					integral/.style={bbnode,double,fill=green!8},
					edge/.style={-latex,semithick},
					lab/.style={font=\scriptsize,inner sep=1pt,fill=white,fill opacity=0.9,text opacity=1}
				]
				\node[bbnode] (r) at (0,0) {$z=\frac{27}{4}$};

				\node[bbnode] (x0) at (-3.2,-1.6) {$z=\frac{3}{2}$};
				\node[bbnode] (x1) at (3.2,-1.6) {$z=\frac{13}{2}$};
				\draw[edge] (r) -- node[lab,pos=0.4,above left=2pt] {$x_i=0$} (x0);
				\draw[edge] (r) -- node[lab,pos=0.4,above right=2pt] {$x_i=1$} (x1);

				\node[integral] (x0y0) at (-4.8,-3.2) {$z=0$};
				\node[infeas] (x0y1) at (-1.6,-3.2) {infeasible};
				\draw[edge] (x0) -- node[lab,pos=0.45,above left=2pt] {$y_i=0$} (x0y0);
				\draw[edge] (x0) -- node[lab,pos=0.45,above right=2pt] {$y_i=1$} (x0y1);

				\node[integral] (x1y0) at (1.6,-3.2) {$z=6$};
				\node[infeas] (x1y1) at (4.8,-3.2) {infeasible};
				\draw[edge] (x1) -- node[lab,pos=0.45,above left=2pt] {$y_i=0$} (x1y0);
				\draw[edge] (x1) -- node[lab,pos=0.45,above right=2pt] {$y_i=1$} (x1y1);
			\end{tikzpicture}
			\caption{The gadget under $\Ccal_2$ in one block. Here $z$ denotes the block LP value. Strong branching branches on $x_i$ and then on $y_i$, and the $y_i=1$ children are infeasible.}
				\label{fig:ab_two_level_gadget}
			\end{figure}
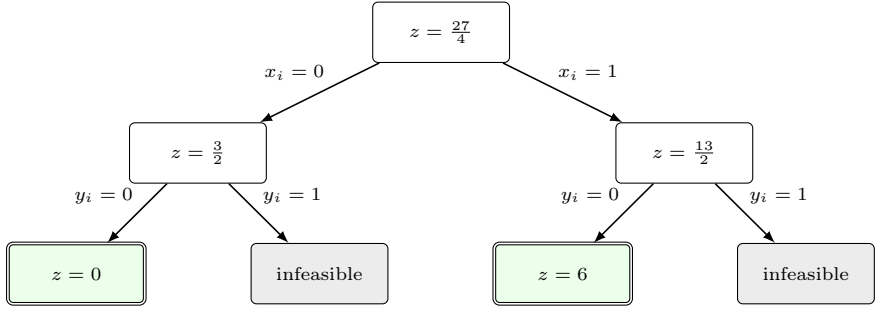

		Fix $k=\lfloor m/9\rfloor$.
		Let $N$ be any node before $k$ blocks have been fixed integrally in this manner.
		Let $d$ be the number of blocks that have already been fixed integrally along the path to $N$.
		If $N$ is at even depth with $d$ completed blocks, then at least $m-d$ blocks are still unfixed, each contributing $\frac{27}{4}$ to the LP value, and fixed blocks contribute at least $0$.
		Hence $z(N)\ge \frac{27}{4}(m-d)$.
		If $N$ is at odd depth, then one additional block has $x$ fixed but $y$ unfixed.
		In that partially fixed block, the LP value is at least $\frac{3}{2}$, so
	\[
		z(N)\ge \frac{3}{2}+\frac{27}{4}(m-d-1)=\frac{27}{4}(m-d)-\frac{21}{4}.
	\]

	Since $P\cap\set{0,1}^2=\set{(0,0),(1,0)}$, the integer optimum of $I_m$ equals $6m$.
	Thus any node $N$ with $z(N)>6m$ cannot be pruned by bound, even if an incumbent of value $6m$ is available.
	For $d\le k-1$ we have $m-9d\ge m-9(k-1)\ge 9$, so
	\[
		\frac{27}{4}(m-d)-6m=\frac{3}{4}(m-9d)>0
	\]
	and
	\[
			\frac{27}{4}(m-d)-\frac{21}{4}-6m=\frac{3}{4}(m-9d-7)>0.
		\]
			Therefore no node created before completing $k$ blocks can be pruned by bound.
			Such a node is also not pruned by integrality, since it still contains an unfixed block.

		This yields a lower bound on the size of the resulting tree that does not depend on the order in which open nodes are processed.
		For $t\in\set{0,1,\dots,k}$, after completing $t$ blocks, the tree contains $2^t$ feasible nodes at depth $2t$.
		For $t<k$, none of these nodes can be pruned by bound or integrality, so each must eventually be branched on, completing one additional block and producing two feasible grandchildren.
		Each such completion adds exactly $6$ new nodes, namely two $x$ children and their four $y$ children.
		Summing over $t=0,\dots,k-1$ gives that the total number of nodes is at least
	    \[
			1+\sum_{t=0}^{k-1} 6\cdot 2^t = 1+6\left(2^k-1\right) = 1+6\left(2^{\lfloor m/9\rfloor}-1\right),
		\]
		which yields the stated lower bound.
\qed
\end{proof}

\subsubsection{Proof of \cref{thm:cut_stability_gap}}\label{sec:cut_stability_gap}
We prove \cref{thm:cut_stability_gap} via a product lower bound for branch-and-bound trees.

\begin{proof}[of \cref{lem:product_gadget}]
	The integer optimum equals $m\beta$.
	Indeed, for $\x\in B^m\cap\set{0,1}^{dm}$ each block $\x_t$ belongs to $B\cap\set{0,1}^d$, hence $\langle\w,\x_t\rangle\le\beta$, and summing over $t\in[m]$ gives $\sum_{t=1}^m\langle\w,\x_t\rangle\le m\beta$.
	Conversely, if $\x^\star\in B\cap\set{0,1}^d$ attains $\beta$, then $(\x^\star,\dots,\x^\star)\in B^m\cap\set{0,1}^{dm}$ attains $m\beta$.

	We now prove the tree size lower bound.
	We claim that for every $m$, every branch-and-bound tree that solves the instance with $m$ blocks using only disjunctions  
    of the form $x_{t,i}\le 0$ or $x_{t,i}\ge 1$ for some $t \in [m]$ and $i \in [d]$
    has at least $2^{m+1}-1$ nodes.
	Since providing a stronger incumbent can only decrease the number of explored nodes, we may assume the algorithm is warm started with an incumbent of value $m\beta$.
	Thus every leaf is either infeasible or has LP upper bound at most $m\beta$.
	We prove the claim by induction on $m$.

\paragraph{Base case.}
	Fix $m=1$ and let $\Tcal$ be any such tree.
	By assumption~(i), the root LP value satisfies $\max\set{\langle\w,\x\rangle:\x\in B}>\beta$, so the root cannot be fathomed by bound when the incumbent is $\beta$.
	Hence the root must branch on some coordinate $x_i$.
	By assumption~(ii) for this $i$, there exist $\u,\v\in B\cap\set{0,1}^d$ with
	$\langle\w,\u\rangle=\langle\w,\v\rangle=\beta$ and $u_i\neq v_i$.
	Since $u_i,v_i\in\set{0,1}$ and $u_i\neq v_i$, exactly one of $u_i,v_i$ equals $0$ and the other equals $1$.
	Therefore the child $x_i=0$ contains one of $\u,\v$ and the child $x_i=1$ contains the other.
	Hence both children are feasible and $|\Tcal|\ge 3=2^{2}-1$.

\paragraph{Inductive step.}
	Fix $m\ge 2$ and assume the claim holds for $m-1$ blocks.
	Consider any branch-and-bound tree $\Tcal$ that solves the instance with $m$ blocks.
	The root branches on some coordinate $x_{t,i}$ for some $t\in[m]$ and $i\in[d]$, creating two children with $x_{t,i}=0$ and $x_{t,i}=1$.
	By assumption~(ii) applied to coordinate $i$, there exist $\u,\v\in B\cap\set{0,1}^d$ with
	$\langle\w,\u\rangle=\langle\w,\v\rangle=\beta$ and $u_i\neq v_i$.
	Without loss of generality, assume $u_i=0$ and $v_i=1$.

	For each $b\in\set{0,1}$, let $\Tcal^{(b)}$ be the subtree rooted at the child $x_{t,i}=b$.
	We analyze $\Tcal^{(0)}$.
	The argument for $\Tcal^{(1)}$ is identical after replacing $\u$ by $\v$.
	Consider the subproblem obtained by additionally fixing block $t$ to $\u$.
	This restriction is feasible in the child $x_{t,i}=0$ since $u_i=0$.
	From $\Tcal^{(0)}$, whenever a node branches on a variable in block $t$, keep only the child consistent with $\x_t=\u$ and delete the other child subtree.
	Contract the resulting degree $1$ branching nodes on block $t$.
		The resulting tree is a valid branch-and-bound tree for the restricted instance, it has at most $|\Tcal^{(0)}|$ nodes, and every leaf is still either infeasible or has LP upper bound at most $m\beta$. Let $\widehat{\Tcal}^{(0)}$ denote this contracted tree.

	Relabel blocks so that $t=1$.
	Under the restriction $\x_1=\u$, the feasible region becomes $\set{\u}\times B^{m-1}$.
	Write $\x=(\u,\y)$ with $\y=(\y_1,\dots,\y_{m-1})$.
	The objective on this restricted instance is
	\[
		\sum_{t=1}^m\langle\w,\x_t\rangle=\langle\w,\u\rangle+\sum_{s=1}^{m-1}\langle\w,\y_s\rangle=\beta+\sum_{s=1}^{m-1}\langle\w,\y_s\rangle.
	\]
	    Since $\widehat{\Tcal}^{(0)}$ has no branchings on variables in the first block, we may interpret $\widehat{\Tcal}^{(0)}$ as a branch-and-bound tree for the MILP with $m-1$ blocks
	\[
		\max\set{\sum_{s=1}^{m-1}\langle\w,\y_s\rangle:\y\in B^{m-1},\ \y\in\set{0,1}^{d(m-1)}}
	\]
	with at most $|\widehat{\Tcal}^{(0)}|$ nodes.
	Since $\x_1$ is fixed to the integer vector $\u$, its contribution to the objective is the constant $\beta$ in every node relaxation of $\widehat{\Tcal}^{(0)}$.
		Thus, every leaf of $\widehat{\Tcal}^{(0)}$ has LP upper bound at most $(m-1)\beta$ when interpreted as a branch-and-bound tree for the MILP with $m-1$ blocks.
	Moreover, every integer feasible solution to the instance with $m-1$ blocks has value at most $(m-1)\beta$, and $(\x^\star,\dots,\x^\star)\in B^{m-1}\cap\set{0,1}^{d(m-1)}$ attains value $(m-1)\beta$.
	Thus $\widehat{\Tcal}^{(0)}$ solves the instance with $m-1$ blocks.
	Therefore $|\widehat{\Tcal}^{(0)}|\ge 2^{m}-1$ by the induction hypothesis.
    Since $|\Tcal^{(0)}| \ge |\widehat{\Tcal}^{(0)}|$, we have $|\Tcal^{(0)}| \ge 2^m-1$. 
	The same argument shows $|\Tcal^{(1)}|\ge 2^{m}-1$.

	Since the root contributes one node and the two subtrees are disjoint,
	\[
		|\Tcal|\ge 1+|\Tcal^{(0)}|+|\Tcal^{(1)}|\ge 1+2(2^{m}-1)=2^{m+1}-1.
	\]
\leavevmode\qed
	\end{proof}

\begin{proof}[of \cref{thm:cut_stability_gap}]
	Fix $\varepsilon>0$ and $n\ge 4$.
	Let
	\[
		m=\left\lfloor\frac{n}{3}\right\rfloor,
		\qquad
		r=n-3m \in \set{0,1,2},
		\qquad
		\varepsilon'=\min\set{\frac{1}{4},\frac{\varepsilon}{m}}.
	\]

	We construct the instance $I$ as follows.
	The variables are $\x=(x_{t,1},x_{t,2},x_{t,3})_{t\in[m]}\in \R^{3m}$ and $\y\in \R^r$.
	The objective is
	\[
		\max \sum_{t=1}^m (x_{t,1}+x_{t,2}+x_{t,3}).
	\]
	The constraints are, for each $t \in [m]$,
	\[
		x_{t,1}+x_{t,2}\le 1,
		\qquad
			x_{t,1}+x_{t,3}\le 1,
			\qquad
			x_{t,2}+x_{t,3}\le 1,
			\qquad
			0\le x_{t,i}\le 1 \text{ for } i\in[3],
		\]
		together with $\y=\0$ and integrality $\x \in \set{0,1}^{3m}$.
		The integer optimum equals $m$, since each block encodes a stable set in a triangle and contributes at most $1$ to the objective.

			We define the cut sets as follows:
			\begin{align*}
				\Ccal
				 & = \set{x_{t,1}+x_{t,2}+x_{t,3}\le 1 : t\in[m]}, \\
				\widetilde{\Ccal}
				 & = \set{x_{t,1}+x_{t,2}+x_{t,3}\le 1+\varepsilon' : t\in[m]}.
			\end{align*}
		Both are valid cutting planes: for any integer feasible point, the three pairwise constraints force $x_{t,1}+x_{t,2}+x_{t,3}\le 1$ in every block $t$.
		Moreover, the root LP optimum sets $x_{t,1}=x_{t,2}=x_{t,3}=1/2$ in every block, so both cut sets are violated at the root.

		For item 1. in the statement of \cref{thm:cut_stability_gap}, each cut in $\Ccal$ and its counterpart in $\widetilde{\Ccal}$ have identical coefficients, and their right-hand sides differ by $\varepsilon'$.

	For item 2. in the statement of \cref{thm:cut_stability_gap}, consider the polytope for one block
	\[
		P=\set{\x\in [0,1]^3 : x_1+x_2\le 1,\ x_1+x_3\le 1,\ x_2+x_3\le 1}.
	\]
	Summing the three inequalities gives $2(x_1+x_2+x_3)\le 3$, so $\max_{\x\in P} (x_1+x_2+x_3)=3/2$, achieved at $(1/2,1/2,1/2)$.

	With the cut set $\Ccal$, each block also satisfies $x_1+x_2+x_3\le 1$, and the block LP value becomes $1$.
	With the cut set $\widetilde{\Ccal}$, we obtain $x_1+x_2+x_3\le 1+\varepsilon'$, and the block LP value becomes $1+\varepsilon'$.
	Indeed, the constraint gives the upper bound, and the point $(1-\varepsilon',\varepsilon',\varepsilon')$ is feasible since $2\varepsilon'\le 1$ and has objective $1+\varepsilon'$.

	Thus,
	\[
			z_{\LP}(I;\Ccal)=m
			\qquad\text{and}\qquad
			z_{\LP}(I;\widetilde{\Ccal})=m(1+\varepsilon').
	\]
		Therefore $|z_{\LP}(I;\Ccal)-z_{\LP}(I;\widetilde{\Ccal})|=m\varepsilon'\le \varepsilon$.

			For item 3. in the statement of \cref{thm:cut_stability_gap}, the root LP value is $z_{\LP}(I)=\frac{3m}{2}$ and the mixed-integer optimum is $m$, so the root LP gap equals $\frac{m}{2}$.
		With $\Ccal$, the LP value becomes $m$, so the closed fraction is $1$.
		With $\widetilde{\Ccal}$, the LP value becomes $m(1+\varepsilon')$, so the remaining gap is $m\varepsilon'$ and the closed fraction is $1-\frac{m\varepsilon'}{m/2}=1-2\varepsilon'$.

		For item 4. in the statement of \cref{thm:cut_stability_gap}, consider first $\Ccal$. Under $\Ccal$, the root LP value equals $m$.
		Since there is an integer feasible solution of value $m$, the root already certifies optimality and $|\Tcal(I;\Ccal)|=1$.

		For $\widetilde{\Ccal}$, define the polytope for one block
		\[
			B_{\varepsilon'}=\set{\x\in [0,1]^3 :
				x_1+x_2\le 1,\ x_1+x_3\le 1,\ x_2+x_3\le 1,\ x_1+x_2+x_3\le 1+\varepsilon'}.
		\]

	Let $\w=\1 \in \R^3$.
	Let $\beta=1$.
	Every point in $B_{\varepsilon'}\cap\set{0,1}^3$ is either $\0$ or a standard basis vector, so $\max\set{\langle\w,\x\rangle:\x\in B_{\varepsilon'}\cap\set{0,1}^3}=\beta$.
	Moreover, $\max\set{\langle\w,\x\rangle:\x\in B_{\varepsilon'}}=1+\varepsilon'$ by the computation for one block above.
	Finally, fix any $i\in[3]$ and take $\u=\e_i$ and $\v=\e_j$ for any $j\neq i$.
	Then $\u,\v\in B_{\varepsilon'}\cap\set{0,1}^3$, $\langle\w,\u\rangle=\langle\w,\v\rangle=\beta$, and $u_i\neq v_i$.

	Applying \cref{lem:product_gadget} to $B_{\varepsilon'}$ and $\w$ shows that any branch-and-bound tree that solves the $0$-$1$ MILP
	\[
		\max\set{\sum_{t=1}^m(x_{t,1}+x_{t,2}+x_{t,3}):\x\in B_{\varepsilon'}^m,\ \x\in\set{0,1}^{3m}}
	\]
	has at least $2^{m+1}-1$ nodes.
	Since $\y$ is fixed to $\0$, the same lower bound holds for $I$ after adding $\widetilde{\Ccal}$.
	Therefore $|\Tcal(I;\widetilde{\Ccal})|\ge 2^{m+1}-1$.

	For the matching upper bound, consider the complete binary tree of depth $m$ obtained by branching on the variables $x_{1,1},x_{2,1},\dots,x_{m,1}$ in this order along every root to leaf path.
	At any leaf, for each block $t$ we have either $x_{t,1}=1$, which forces $x_{t,2}=x_{t,3}=0$, or $x_{t,1}=0$, which forces $x_{t,2}+x_{t,3}\le 1$.
	In both cases, the block contributes at most $1$ to the objective, so $\sum_{t=1}^m(x_{t,1}+x_{t,2}+x_{t,3})\le m$ holds on every leaf region.
	At any node where $x_{t,1}$ is not yet fixed, the block LP optimum attains value $1+\varepsilon'$ and in particular has $x_{t,1}$ fractional, so each branching variable is fractional at the node LP optimum.
	This gives a branch-and-bound tree with exactly $2^{m+1}-1$ nodes, hence $|\Tcal(I;\widetilde{\Ccal})|\le 2^{m+1}-1$.

	Combining the bounds yields $|\Tcal(I;\widetilde{\Ccal})|=2^{m+1}-1=2^{\lfloor n/3\rfloor+1}-1$.
\qed
\end{proof}


\subsection{Proofs for Branching Variable Selection}
\label{app:branching_proofs}

\subsubsection{Setup and the Instance Family}


We consider $0$--$1$ MILPs in packing form
\begin{equation}\label{eq:packing_form}
	\max\set{\c^\top \x : A\x\le\b,\ \x\in\set{0,1}^{n_{\mathrm{var}}}}.
\end{equation}
We use best bound node selection and branching on LP fractional variables, as described in \cref{sec:prelim}.
Strong branching selects the variable maximizing the product score \eqref{eq:sb_score}, with ties broken by smallest index.

\begin{definition}[The instance family]\label{def:InK}
	Fix an integer $n\in\N$, and set $M=7n+8$.
	The instance $I_n$ is the binary packing problem \eqref{eq:packing_form} defined as follows.

	For each $i\in[n]$, introduce variables $b_i,p_i,y_{i,1},y_{i,2},y_{i,3}\in\set{0,1}$.
	We fix the variable ordering
	\[
		\x=(b_1,\dots,b_n,\ p_1,\dots,p_n,\ y_{1,1},\dots,y_{n,1},\ y_{1,2},\dots,y_{n,2},\ y_{1,3},\dots,y_{n,3}).
	\]

	For each $i\in[n]$, impose the four inequalities
	\begin{align}
		2b_i+p_i            & \le 2,\label{eq:block_C1}  \\
		b_i+y_{i,1}+y_{i,2} & \le 2,\label{eq:block_C2a} \\
		b_i+y_{i,1}+y_{i,3} & \le 2,\label{eq:block_C2b} \\
		b_i+y_{i,2}+y_{i,3} & \le 2.\label{eq:block_C2c}
	\end{align}

	The objective is
	\begin{equation}\label{eq:InK_obj}
		\max\ \sum_{i=1}^n\bigl(24b_i+Mp_i+4y_{i,1}+8y_{i,2}+8y_{i,3}\bigr).
	\end{equation}
\end{definition}

We will use two auxiliary LP calculations for a single block of the instance family.
We analyze a single block, dropping the block index $i$ and writing
$(b,p,y_1,y_2,y_3)\in[0,1]^5$ with objective $24b+Mp+4y_1+8y_2+8y_3$ and constraints
\eqref{eq:block_C1}--\eqref{eq:block_C2c}.

\begin{lemma}\label{lem:triangle_pack}
	For $t\in[0,2]$, consider the polyhedron
	\[
		Y(t)=\set{\y\in[0,1]^3 : y_1+y_2\le t,\ y_1+y_3\le t,\ y_2+y_3\le t}.
	\]
	Then
	\[
		\max_{\y\in Y(t)}\ 4y_1+8y_2+8y_3=10t,
	\]
	and the unique maximizer is $y_1=y_2=y_3=t/2$.
\end{lemma}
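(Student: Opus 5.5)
The claim is a small LP fact, and I would prove it by an explicit dual certificate: exhibit nonnegative multipliers on the three pairwise constraints whose combination dominates the objective coefficientwise, then check that the candidate point attains the bound.

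\emph{Upper bound.} The objective is $4y_1+8y_2+8y_3$. I would take multipliers $\lambda_{12}=\lambda_{13}=2$ and $\lambda_{23}=6$ on the constraints $y_1+y_2\le t$, $y_1+y_3\le t$, $y_2+y_3\le t$. Adding them gives
\[
2(y_1+y_2)+2(y_1+y_3)+6(y_2+y_3)=4y_1+8y_2+8y_3\le 10t.
\]
This combination reproduces the objective exactly, so every $\y\in Y(t)$ satisfies $4y_1+8y_2+8y_3\le 10t$, and no bound constraints are needed.

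\emph{Attainment.} The point $y_1=y_2=y_3=t/2$ lies in $[0,1]^3$ because $t\le 2$. Each pairwise sum equals $t$, so the point is feasible. Its objective value is $4(t/2)+16(t/2)=10t$, so the maximum equals $10t$.

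\emph{Uniqueness.} Let $\y$ be any maximizer. Equality must hold in the combined inequality. Since all three multipliers are strictly positive, each of the three pairwise constraints must then be tight:
\[
y_1+y_2=y_1+y_3=y_2+y_3=t.
\]
This linear system has the unique solution $y_1=y_2=y_3=t/2$, which gives uniqueness.

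The only point needing care is choosing multipliers that are all strictly positive, since a zero multiplier would break the uniqueness argument. The choice above works for every $t\in[0,2]$. At $t=0$ the feasible set is just $\{\0\}$, so the statement holds trivially there as well.
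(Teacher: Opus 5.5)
Your proposal is correct and follows essentially the same argument as the paper: you use the same multipliers $2,2,6$ on the pairwise constraints to get the bound $10t$, feasibility of $(t/2,t/2,t/2)$ for attainment, and tightness of all three constraints at any maximizer to force the unique solution $y_1=y_2=y_3=t/2$.
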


\begin{proof}
	The point $(t/2,t/2,t/2)\in[0,1]^3$ is feasible for $t\le 2$ and has value $10t$, so the maximum
	is at least $10t$.

	Let $\y\in Y(t)$ be arbitrary. Then
	\begin{align*}
		4y_1+8y_2+8y_3
		 & =2(y_1+y_2)+2(y_1+y_3)+6(y_2+y_3) \\
		 & \le 2t+2t+6t                      \\
		 & =10t.
	\end{align*}
	Hence the maximum is at most $10t$, and therefore it equals $10t$.

	If $\y$ is optimal, then $4y_1+8y_2+8y_3=10t$, so the inequalities above hold with equality.
	Since $y_1+y_2\le t$, $y_1+y_3\le t$, and $y_2+y_3\le t$, this implies
	$y_1+y_2=t$, $y_1+y_3=t$, and $y_2+y_3=t$.
	This linear system has the unique solution $y_1=y_2=y_3=t/2$.
\qed
\end{proof}

\begin{lemma}\label{lem:block_values}
	Let $n\in\N$, let $M=7n+8$, and consider a single block of $I_n$.
	In each item below, the stated LP value is computed under the specified fixings, with all other variables in the block left free.
	\begin{enumerate}
		\item If no variables in the block are fixed, then the LP relaxation has the unique optimum
		      \[
			      b=\frac12,\qquad p=1,\qquad y_1=y_2=y_3=\frac34,
		      \]
		      with LP value $M+27$.
		\item If $b$ is fixed to $0$, then the block LP value is $M+20$.
		      If $b$ is fixed to $1$, then the block LP value is $34$.
		\item If $y_1$ is fixed to $0$ or to $1$, then the block LP value is $M+24$ in both cases.
		\item If $y_2$ is fixed to $0$, then the block LP value is $M+22$.
		      If $y_2$ is fixed to $1$, then the block LP value is $M+26$.
		      The same conclusions hold when $y_3$ is fixed instead of $y_2$.
	\end{enumerate}
\end{lemma}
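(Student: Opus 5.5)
The approach is to reduce every item to a one-parameter maximization over $b$, using \cref{lem:triangle_pack} for the $\y$ part. For fixed $b\in[0,1]$, the constraints \eqref{eq:block_C2a}--\eqref{eq:block_C2c} say exactly that $\y\in Y(2-b)$. Constraint \eqref{eq:block_C1} only couples $b$ and $p$, giving $p\le \min\{1,2-2b\}$. Since $M>0$, the optimal $p$ is $\min\{1,2-2b\}$. Applying \cref{lem:triangle_pack} with $t=2-b\in[1,2]$, the $\y$ contribution is $10(2-b)$, uniquely attained at $y_j=(2-b)/2$. Hence, with nothing in $\y$ fixed, the block value is
\[
f(b)=24b+M\min\{1,2-2b\}+10(2-b)
\]
for $b\in[0,1]$.

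\textbf{Item 1.} On $[0,\tfrac12]$ we have $f(b)=M+20+14b$, which is increasing. On $[\tfrac12,1]$ we have $f(b)=2M+20-(2M-14)b$, which is decreasing because $2M>14$. So $f$ has the unique maximizer $b=\tfrac12$ with value $M+27$. Uniqueness of the full optimum then follows in three steps: $b=\tfrac12$ is unique, $p=1$ is forced because $M>0$, and $\y=(\tfrac34,\tfrac34,\tfrac34)$ is unique by \cref{lem:triangle_pack}.

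\textbf{Item 2.} Evaluating $f$ gives $f(0)=M+20$ and $f(1)=24+0+10=34$.

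\textbf{Items 3 and 4.} Here one $y$-coordinate is fixed, so \cref{lem:triangle_pack} no longer applies directly. I would handle this by redoing its weighted-sum certificate for the restricted $\y$-problem at a given $t=2-b$.

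For item 3, fixing $y_1=0$ leaves $\max\{8y_2+8y_3 : y_2+y_3\le t,\ y_2,y_3\le 1\}=8t$, since $t\ge1$. Fixing $y_1=1$ gives $4+\max\{8y_2+8y_3 : y_2,y_3\le t-1,\ y_2+y_3\le t\}=4+8\min\{t,2(t-1)\}$. Writing these as functions of $b$ and adding $24b+M\min\{1,2-2b\}$ gives a piecewise-linear function on $[0,1]$. I then maximize it by checking the breakpoints $b=0$, $b=\tfrac12$, $b=1$, and $b=0$ where $2(t-1)=t$ (i.e.\ $t=2$). In the $y_1=0$ case the slope on $[0,\tfrac12]$ is $24-8=16>0$, so the maximum is at $b=\tfrac12$ with value $M+12+12=M+24$. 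In the $y_1=1$ case, at $b=\tfrac12$ we have $t=\tfrac32$, so the value is $12+M+4+8=M+24$. I then confirm that $b=0$, which gives $M+4+16=M+20$, is smaller, and that the branch $b>\tfrac12$ is dominated because $M$ is large.

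For item 4, fixing $y_2=0$ gives $\max\{4y_1+8y_3 : y_1+y_3\le t\}=8+4(t-1)$. Fixing $y_2=1$ gives $8+\max\{4y_1+8y_3 : y_1,y_3\le t-1\}=8+12(t-1)$. The resulting values are $M+22$ and $M+26$ at $b=\tfrac12$. By symmetry between $y_2$ and $y_3$, the same conclusions hold when $y_3$ is fixed instead.

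The main obstacle is not the algebra but ensuring the maximizer over $b$ is really at $b=\tfrac12$ in each fixed case. To the right of $\tfrac12$ the slope contains $-2M$, and $M=7n+8$ exceeds every other coefficient (at most $24+12$), so the slope there is negative. To the left of $\tfrac12$, I will check that each slope, of the form $24-(\text{coefficient of }t)$, is positive. This sign check is the step I would verify carefully.
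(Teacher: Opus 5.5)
Your proposal is correct and follows essentially the same route as the paper. Both arguments fix $b$, take $p=\min\set{1,2-2b}$, evaluate the $\y$-part via \cref{lem:triangle_pack} (or directly when one $y$-coordinate is fixed), and maximize the resulting piecewise-linear function of $b$, which peaks at $b=\tfrac12$. Two small slips need fixing, though neither affects the conclusions:
\begin{enumerate}
\item The $\y$-part is nondecreasing in $t=2-b$, so on $[\tfrac12,1]$ negativity of the slope only needs $2M>24$, which holds since $M\ge 15$. Your stated reason, that $M$ exceeds $24+12$, is false for small $n$.
\item For $y_1=0$, the value $8t$ of the $y$-subproblem relies on $t\le 2$, not on $t\ge 1$.
\end{enumerate}
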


\begin{proof}
	(1) Fix $b\in[0,1]$.
	Maximizing the objective subject to $2b+p\le 2$ and $p\in[0,1]$ sets
	$p=\min\set{1,2-2b}$.
	Let $t=2-b\in[1,2]$.
	Constraints \eqref{eq:block_C2a}--\eqref{eq:block_C2c} are equivalent to $\y\in Y(t)$, so
	\cref{lem:triangle_pack} yields
	\[
		\max_{\y\in Y(t)}\ (4y_1+8y_2+8y_3)=10t,
	\]
	attained uniquely at $y_1=y_2=y_3=t/2$.
	Therefore, for fixed $b$ the block LP value equals
	\[
		g(b)=24b+M\min\set{1,2-2b}+10(2-b).
	\]
	If $b\le 1/2$ then $g(b)=M+20+14b$, which is maximized on $[0,1/2]$ at $b=1/2$.
		If $b\ge 1/2$ then $g(b)=2M+20+(14-2M)b$, which is maximized on $[1/2,1]$ at $b=1/2$ since $M\ge 15$.
	Thus $b=1/2$ is the unique maximizer, and the corresponding optimum is $p=1$ and
	$y_1=y_2=y_3=3/4$, yielding LP value $M+27$.

	\smallskip
	(2) If $b=0$, then \eqref{eq:block_C1} permits $p=1$, and \eqref{eq:block_C2a}--\eqref{eq:block_C2c}
	permit $y_1=y_2=y_3=1$. This yields value $M+20$ and is optimal since all objective coefficients are
	nonnegative.
	If $b=1$, then \eqref{eq:block_C1} forces $p=0$ and \eqref{eq:block_C2a}--\eqref{eq:block_C2c} reduce
	to $Y(1)$. By \cref{lem:triangle_pack}, the maximum $y$-value is $10$, so the value is $24+10=34$.

	\smallskip
	(3) We first treat $y_1=0$.
	Constraints \eqref{eq:block_C2a} and \eqref{eq:block_C2b} become $b+y_2\le 2$ and $b+y_3\le 2$, and
	\eqref{eq:block_C2c} becomes $b+y_2+y_3\le 2$.
	Fix $b\in[0,1]$.
	Maximizing over $p$ again gives $p=\min\set{1,2-2b}$.
	The remaining constraints imply $y_2,y_3\in[0,1]$ and $y_2+y_3\le 2-b$, so the maximum of
	$8y_2+8y_3$ equals $8(2-b)$, attained whenever $y_2+y_3=2-b$.
	Hence the block LP value for fixed $b$ equals
	\[
		h_0(b)=24b+M\min\set{1,2-2b}+8(2-b).
	\]
	If $b\le 1/2$ then $h_0(b)=M+16+16b$.
	If $b\ge 1/2$ then $h_0(b)=2M+16+(16-2M)b$.
	In both cases the maximum is attained uniquely at $b=1/2$, yielding $p=1$ and block LP value $M+24$.
	One optimal choice sets $y_2=y_3=3/4$.

	Now treat $y_1=1$.
	Constraints \eqref{eq:block_C2a} and \eqref{eq:block_C2b} become $b+y_2\le 1$ and $b+y_3\le 1$, and
	\eqref{eq:block_C2c} becomes $b+y_2+y_3\le 2$.
	Fix $b\in[0,1]$.
	Maximizing over $p$ gives $p=\min\set{1,2-2b}$.
	The first two inequalities imply $y_2,y_3\le 1-b$, so the maximum of $8y_2+8y_3$ equals $16(1-b)$,
	attained uniquely at $y_2=y_3=1-b$.
	Hence the block LP value for fixed $b$ equals
	\[
		h_1(b)=24b+M\min\set{1,2-2b}+4+16(1-b).
	\]
	If $b\le 1/2$ then $h_1(b)=M+20+8b$.
	If $b\ge 1/2$ then $h_1(b)=2M+20+(8-2M)b$.
	In both cases the maximum is attained uniquely at $b=1/2$, yielding $p=1$, $y_2=y_3=1/2$, and block LP value $M+24$.

	\smallskip
	(4) We analyze branching on $y_2$; the case with $y_3$ is the same by symmetry.
	Fix $y_2=0$ and assume no other variables in the block are fixed.
	Then \eqref{eq:block_C2a} becomes $b+y_1\le 2$ and \eqref{eq:block_C2c} becomes $b+y_3\le 2$, while
	\eqref{eq:block_C2b} becomes $b+y_1+y_3\le 2$.
	Fix $b\in[0,1]$.
	Maximizing over $p$ gives $p=\min\set{1,2-2b}$.
	The remaining constraints imply $y_1,y_3\in[0,1]$ and $y_1+y_3\le 2-b$, so the maximum of
	$4y_1+8y_3$ equals $4(1-b)+8$, attained at $y_3=1$ and $y_1=1-b$.
	Therefore the block LP value for fixed $b$ equals
	\[
		24b+M\min\set{1,2-2b}+4(1-b)+8.
	\]
	If $b\le 1/2$ then this equals $M+12+20b$, which is maximized on $[0,1/2]$ at $b=1/2$.
		If $b\ge 1/2$ then this equals $2M+12+(20-2M)b$, which is maximized on $[1/2,1]$ at $b=1/2$ since $M\ge 15$.
	Thus the block LP value is $M+22$.

	Next fix $y_2=1$.
	Then \eqref{eq:block_C2a} becomes $b+y_1\le 1$ and \eqref{eq:block_C2c} becomes $b+y_3\le 1$, while
	\eqref{eq:block_C2b} becomes $b+y_1+y_3\le 2$.
	Fix $b\in[0,1]$ and maximize over $p$ to get $p=\min\set{1,2-2b}$.
	The first two inequalities imply $y_1,y_3\le 1-b$, so the maximum of $4y_1+8y_3$ equals $12(1-b)$,
	attained uniquely at $y_1=y_3=1-b$.
	Therefore the block LP value for fixed $b$ equals
	\[
		24b+M\min\set{1,2-2b}+8+12(1-b).
	\]
	If $b\le 1/2$ then this equals $M+20+12b$, which is maximized on $[0,1/2]$ at $b=1/2$.
		If $b\ge 1/2$ then this equals $2M+20+(12-2M)b$, which is maximized on $[1/2,1]$ at $b=1/2$ since $M\ge 15$.
	Thus the block LP value is $M+26$.
\qed
\end{proof}

\begin{lemma}[Strong branching tree size on $I_n$]\label{lem:sb_tree_size_InK}
	Fix $n\in\N$ and let $I_n$ be the instance from \cref{def:InK} with parameter $M=7n+8$.
	Under the standing assumptions, the mixed-integer optimum satisfies
	\[
		\mathrm{OPT}(I_n)=n(M+20),
	\]
	and the strong branching policy $\pi_{\mathrm{SB}}$ satisfies
	\[
		|\Tcal_{\pi_{\mathrm{SB}}}(I_n)| = 2n+1.
	\]
\end{lemma}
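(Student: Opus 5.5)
The plan is to handle the optimum first and then follow the strong branching run explicitly, using the block decomposition together with \cref{lem:block_values}.

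\emph{The optimum.} The instance is a product of $n$ identical, independent blocks, so $\mathrm{OPT}(I_n)$ is $n$ times the integer optimum of one block. Split on $b$. If $b=0$, the point $p=1$, $y_1=y_2=y_3=1$ is feasible, since each triple sum equals $2$, and it has value $M+20$. This is the largest possible value because every objective coefficient is nonnegative and every variable is at most $1$. If $b=1$, then \eqref{eq:block_C1} forces $p=0$, and \eqref{eq:block_C2a}--\eqref{eq:block_C2c} allow at most one $y_j=1$. The best value is then at most $24+8=32<M+20$. Hence $\mathrm{OPT}(I_n)=n(M+20)$.

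\emph{Strong branching scores at a node.} Consider a node on the run where $k$ blocks are fixed with $b_i=0$ and are integral at their LP optimum, and the other $n-k$ blocks are free. Because the LP decomposes, by \cref{lem:block_values}(1) each free block has the unique optimum $b=\tfrac12$, $p=1$, $y=\tfrac34$. So the fractional candidates are exactly $b_i,y_{i,1},y_{i,2},y_{i,3}$ for the free blocks. Branching affects only its own block, so the improvements are the per-block differences from \cref{lem:block_values}:
\begin{itemize}
\item for $b_i$: $(7,\ M-7)$;
\item for $y_{i,1}$: $(3,3)$;
\item for $y_{i,2}$ and $y_{i,3}$: $(5,1)$.
\end{itemize}
The argmax has to be settled for every value of $\eta_{\mathrm{SB}}$.
\begin{itemize}
\item If $\eta_{\mathrm{SB}}<7$, the score of $b_i$ is $7(M-7)$, and this beats $\max(9,\,5\max(1,\eta_{\mathrm{SB}}),\,\eta_{\mathrm{SB}}^2)$ because $M-7=7n+1$ is large.
\item If $7\le\eta_{\mathrm{SB}}<M-7$, the score of $b_i$ is $\eta_{\mathrm{SB}}(M-7)>\eta_{\mathrm{SB}}^2$.
\item If $\eta_{\mathrm{SB}}\ge M-7$, every score ties at $\eta_{\mathrm{SB}}^2$. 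The smallest-index rule then picks a $b_i$, since the $b$ variables come first in the ordering.
\end{itemize}
In every case strong branching branches on some $b_i$ of a free block.

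\emph{Shape of the tree.} The child $b_i=0$ has block value $M+20$ at an integral point (item (2)). Its LP value is $(k+1)(M+20)+(n-k-1)(M+27)$, which is at least $\mathrm{OPT}$, with equality only when $k=n-1$, and there the node is integral. The child $b_i=1$ has LP value
\[
k(M+20)+34+(n-k-1)(M+27)\le n(M+20)+34-(M+20)+7(n-1)=\mathrm{OPT}-1,
\]
where the last step uses $M=7n+8$. Under best-bound selection, each $b_i=0$ child has value at least $\mathrm{OPT}$ and is strictly larger than all the open $b=1$ children. So the run descends the chain of $b=0$ children until, at depth $n$, it reaches an integral node of value $\mathrm{OPT}$, which becomes the incumbent. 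Each $b=1$ child has bound at most $\mathrm{OPT}-1$. This is below the incumbent, or below any valid incumbent under the standing assumption, so each is pruned by bound as soon as it is processed, and none of them is ever branched on. The tree therefore consists of the root, $n$ chain nodes and $n$ pruned siblings, for exactly $2n+1$ nodes.

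\emph{Main obstacle.} The delicate step is showing that every $b_i=1$ child is pruned and not branched. This relies on the exact gap of $-1$ produced by the choice $M=7n+8$. One also has to argue, using best-bound ordering, that these nodes are popped only after an incumbent of value $\mathrm{OPT}$ exists, or else invoke the standing incumbent assumption. That assumption gives only an upper bound in general, so for the exact count I would rely on the ordering argument. A second point needing care is the tie-breaking case $\eta_{\mathrm{SB}}\ge M-7$, which is handled by the variable ordering.
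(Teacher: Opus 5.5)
Your proposal is correct and follows essentially the same route as the paper's proof. You compute $\mathrm{OPT}$ blockwise, read the improvements $(7,M-7)$, $(3,3)$, $(5,1)$ from \cref{lem:block_values} to show that a free $b_i$ is always selected, and then use best-bound ordering, with the $b_i=1$ siblings bounded by $\mathrm{OPT}-1$, to obtain the chain of $2n+1$ nodes; your explicit case split on $\eta_{\mathrm{SB}}$ simply makes precise what the paper asserts in one line.
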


\begin{proof}
	We first note that $\mathrm{OPT}(I_n)=n(M+20)$.
	Indeed, setting $b_i=0$ and $p_i=y_{i,1}=y_{i,2}=y_{i,3}=1$ in every block yields an integer feasible solution of value $n(M+20)$.
	If instead $b_i=1$ in some block, then \eqref{eq:block_C1} forces $p_i=0$ and \eqref{eq:block_C2a}--\eqref{eq:block_C2c} imply that at most one of $y_{i,1},y_{i,2},y_{i,3}$ can be $1$, so the block value is at most $24+8=32$.
	Since $M=7n+8\ge 15$, we have $M+20>32$, so an optimal solution sets $b_i=0$ for all $i$.

		We now analyze the strong branching run with best bound node selection.
		At any node that contains a free block, \cref{lem:block_values} implies that the LP optimum in that block has $b_i=1/2$ and $y_{i,1}=y_{i,2}=y_{i,3}=3/4$.
		Branching on $b_i$ yields LP improvements $7$ and $M-7$, while branching on $y_{i,1}$ yields improvements $3$ and $3$, and branching on $y_{i,2}$ or $y_{i,3}$ yields improvements $5$ and $1$.
		Thus, for any $\eta_{\mathrm{SB}}>0$, the strong branching product score \eqref{eq:sb_score} selects a $b$ variable from a free block.
		When $\eta_{\mathrm{SB}}\ge M-7$ this selection can be a tie, since then every candidate in a free block has score $\eta_{\mathrm{SB}}^2$, and the tie-breaking rule selects the smallest index.
		In particular, at the root the policy branches on $b_1$.

	Let $N_t$ denote the node with $b_1=\cdots=b_t=0$ and all other variables unfixed.
	By block separability and \cref{lem:block_values}, the LP value of $N_t$ equals $n(M+27)-7t$.
	Any other open node created before depth $n$ has at least one index $i$ with $b_i=1$ and therefore has LP value at most $n(M+27)-(M-7)$.
	Since $M=7n+8$, for all $t\le n-1$ we have $n(M+27)-7t>n(M+27)-(M-7)$, so best bound always selects $N_t$ next.
	At depth $n$, node $N_n$ is integral and has value $n(M+20)=\mathrm{OPT}(I_n)$.
	All remaining open nodes have LP value at most $n(M+27)-(M-7)<\mathrm{OPT}(I_n)$ and are pruned by bound.

	Counting created nodes gives one root and two children for each of the $n$ branchings, so $|\Tcal_{\pi_{\mathrm{SB}}}(I_n)|=2n+1$.
\qed
	\end{proof}

	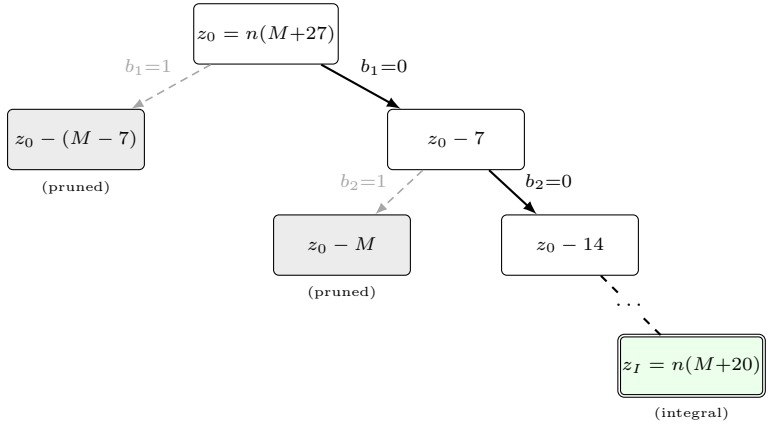
\begin{figure}[htbp]
		\centering
		\begin{tikzpicture}[
				scale=1, transform shape,
				bbnode/.style={rectangle,draw,rounded corners=2pt,minimum width=18mm,minimum height=8mm,inner sep=1pt,font=\scriptsize,align=center},
				pruned/.style={bbnode,fill=gray!15},
				integral/.style={bbnode,double,fill=green!8},
				edge/.style={-latex,semithick},
				path/.style={edge,thick},
				prunededge/.style={edge,densely dashed,gray!70},
				lab/.style={font=\scriptsize,inner sep=1pt,fill=white,fill opacity=0.9,text opacity=1}
			]
				\node[bbnode] (root) at (0,0) {$z_0=n(M{+}27)$};

				\node[pruned] (p1) at (-2.5,-1.4) {$z_0-(M-7)$};
				\node[bbnode] (n1) at (2.5,-1.4) {$z_0-7$};
				\draw[prunededge] (root) -- node[lab,pos=0.4,above left=2pt] {$b_1{=}1$} (p1);
				\draw[path] (root) -- node[lab,pos=0.4,above right=2pt] {$b_1{=}0$} (n1);
				\node[font=\tiny,below=1pt of p1] {(pruned)};

				\node[pruned] (p2) at (1.0,-2.8) {$z_0-M$};
				\node[bbnode] (n2) at (4.0,-2.8) {$z_0-14$};
				\draw[prunededge] (n1) -- node[lab,pos=0.6,above left=2pt] {$b_2{=}1$} (p2);
				\draw[path] (n1) -- node[lab,pos=0.6,above right=2pt] {$b_2{=}0$} (n2);
				\node[font=\tiny,below=1pt of p2] {(pruned)};

				\node (dots) at (4.8,-3.6) {$\cdots$};
				\node[integral] (nk) at (5.6,-4.4) {$z_I=n(M{+}20)$};
				\draw[thick,dashed] (n2) -- (dots);
				\draw[thick,dashed] (dots) -- (nk);
				\node[font=\tiny,below=1pt of nk] {(integral)};
			\end{tikzpicture}
			\caption{The tree under $\pi_{\mathrm{SB}}$ on $I_n$. Node labels show LP bounds. Here $z_0=n(M{+}27)$ is the root LP value and $z_I=n(M{+}20)$ is the incumbent found at depth $n$. Best bound follows the chain $b_1{=}\cdots{=}b_n{=}0$, and siblings with $b_i{=}1$ are pruned by bound.}
			\label{fig:sb_tree_schematic}
		\end{figure}
	
	\subsubsection{Proof of \cref{thm:exp_gap_imitation}}
	\label{app:exp_gap_imitation}

\begin{proof}[of \cref{thm:exp_gap_imitation}]
	Fix $n\ge 3$ and $\varepsilon>0$.
	Let $M=7n+8$, and let $\eta_{\mathrm{SB}}>0$ be the constant in the strong branching score \eqref{eq:sb_score}.
	Let $I_n$ be the instance from \cref{def:InK} with parameter $n$ and objective \eqref{eq:InK_obj}.
	Define the scaling factor
	\[
		\alpha:=\frac{\eta_{\mathrm{SB}}}{2(M+27)}.
	\]
		Let $\widetilde I_n$ be the instance obtained from $I_n$ by multiplying every objective coefficient by $\alpha$.
		This does not change the feasible region, the variable ordering, or the set of LP optimal solutions at any node.
		Moreover, every node LP value and every LP bound improvement scale by $\alpha$.

	We use the block LP calculations from \cref{lem:block_values} for $I_n$.
	After scaling, a free block has LP value $\alpha(M+27)$ and an optimal LP solution with $b=\tfrac12$, $p=1$, and $y_1=y_2=y_3=\tfrac34$.
	If $b$ is fixed to $0$ or $1$, the block LP values are $\alpha(M+20)$ and $\alpha\cdot 34$, respectively.
		If $y_1$ is fixed to $0$ or $1$, the block LP value is $\alpha(M+24)$ in both cases.

		We first show that strong branching scores are constant on $\widetilde I_n$.
		Consider any node $N$ in the branch-and-bound tree generated by either policy constructed below, and fix any candidate branching variable $j$ at $N$.
		Let $i$ be the block containing $j$, and for each block $t$ let $v_t(N)$ denote the optimal value of the LP relaxation of block $t$ at $N$.
		By separability across blocks,
		\[
			z(N)=\sum_{t=1}^n v_t(N).
		\]
		For $b\in\set{0,1}$, the child $N^{(b)}$ differs from $N$ only through additional fixings in block $i$, hence
		\[
			z(N^{(b)})=v_i(N^{(b)})+\sum_{t\neq i} v_t(N),
		\]
		and therefore $\Delta^{(b)}(N,j)=v_i(N)-v_i(N^{(b)})$.
		
		We claim that both children $N^{(0)}$ and $N^{(1)}$ are LP feasible.
		For the branch $j=0$, take any LP feasible solution at $N$ and set the variable indexed by $j$ to $0$.
		This preserves feasibility since all constraints have nonnegative coefficients.
		For the branch $j=1$, we give an explicit feasible assignment for block $i$ that is consistent with the fixings at $N$.
		If the variable indexed by $j$ is $b_i$, set $b_i=1$ and $p_i=0$, and set every $y$ variable in that block that is not fixed at $N$ to $0$.
		Otherwise, if the variable indexed by $j$ is $p_i$, note that $b_i$ cannot be fixed to $1$ at $N$, since $2b_i+p_i\le 2$ would force $p_i=0$.
		Set $p_i=1$ and $b_i=0$, and set every $y$ variable in that block that is not fixed at $N$ to $0$.
		Otherwise, the variable indexed by $j$ is a $y$ variable in block $i$.
		Set it to $1$ and set the other two $y$ variables that are not fixed at $N$ to $0$.
		If $b_i$ is fixed to $1$ at $N$, set $p_i=0$, and otherwise set $b_i=0$ and $p_i=1$.
		In every case the block constraints \eqref{eq:block_C1}--\eqref{eq:block_C2c} are satisfied, so both children are feasible.
		
		Since all objective coefficients are nonnegative, we have $v_i(N^{(b)})\ge 0$ for $b\in\set{0,1}$, and hence $\Delta^{(b)}(N,j)\le v_i(N)$.
		Moreover, each block has LP value at most $\alpha(M+27)$ at every node, because a free block attains LP value $\alpha(M+27)$ and additional fixings can only decrease the LP value.
		Hence,
		\[
			\Delta^{(0)}(N,j)\le \alpha(M+27)=\frac{\eta_{\mathrm{SB}}}{2}
		\qquad\text{and}\qquad
		\Delta^{(1)}(N,j)\le \alpha(M+27)=\frac{\eta_{\mathrm{SB}}}{2}.
		\]
		In particular, $\Delta^{(0)}(N,j)<\eta_{\mathrm{SB}}$ and $\Delta^{(1)}(N,j)<\eta_{\mathrm{SB}}$.
		Therefore \eqref{eq:sb_score} implies
				\[
					\Score_{\mathrm{SB}}(N,j)=\eta_{\mathrm{SB}}^2
						\qquad\text{for every such node $N$ and every candidate variable $j$.}
		\]

		We now bound the tree size under strong branching.
		Let $\pi_{\mathrm{SB}}$ be the strong branching policy, with ties broken by the smallest index under the variable ordering in \cref{def:InK}.
		Since $\Score_{\mathrm{SB}}(N,j)$ is constant over candidates, $\pi_{\mathrm{SB}}$ always branches on the smallest index variable that is fractional in an optimal LP solution at the current node.
		At any node that contains a free block, \cref{lem:block_values}(1) implies that $b_i=1/2$ in the unique block LP optimum, so $\pi_{\mathrm{SB}}$ branches on the smallest index free $b_i$.
		Moreover, scaling the objective by $\alpha$ multiplies every node LP value by the same factor and therefore preserves the node ordering under best bound.
		Thus, the best bound comparisons in the proof of \cref{lem:sb_tree_size_InK} carry over directly after scaling, and we obtain
		\[
			|\Tcal_{\pi_{\mathrm{SB}}}(\widetilde I_n)|=2n+1.
	\]
	As in the proof of \cref{lem:sb_tree_size_InK}, we have $\mathrm{OPT}(I_n)=n(M+20)$ and therefore $\mathrm{OPT}(\widetilde I_n)=\alpha n(M+20)$.

	We now define a policy whose scores are $\varepsilon$ close to $\Score_{\mathrm{SB}}$ and that generates an exponential tree.
	Define a scoring function $\widehat{\Score}$ by
	\[
			\widehat{\Score}(N,j)=
			\begin{cases}
				\Score_{\mathrm{SB}}(N,j)+\varepsilon/2, & \text{if $j$ indexes some fractional $y_{i,1}$ at $N$,} \\
				\Score_{\mathrm{SB}}(N,j), & \text{otherwise.}
			\end{cases}
		\]
	Then for every node $N$ and every index $j$,
	\[
		\bigl|\widehat{\Score}(N,j)-\Score_{\mathrm{SB}}(N,j)\bigr|\le \varepsilon/2\le \varepsilon,
	\]
	which proves item~1 in the theorem statement.

	Let $\hat\pi$ be the argmax policy induced by $\widehat{\Score}$, with ties broken by the smallest index.
	At the root, every block is free, hence $y_{1,1}$ is fractional in the root LP optimum.
	Since $\Score_{\mathrm{SB}}$ is constant over candidates, every fractional $y_{i,1}$ has score $\eta_{\mathrm{SB}}^2+\varepsilon/2$, while every other candidate has score $\eta_{\mathrm{SB}}^2$.
	Therefore $\hat\pi$ branches on $y_{1,1}$.

	More generally, consider a node $N$ at depth $d\le n$ in the tree generated by $\hat\pi$.
	Such a node is obtained by fixing $y_{1,1},\dots,y_{d,1}$, each to $0$ or $1$, and leaving all $b$ variables unfixed.
	By separability across blocks and the scaled block values above, the first $d$ blocks contribute $\alpha(M+24)$ each, and the remaining $n-d$ blocks are free and contribute $\alpha(M+27)$ each.
	Hence
	\[
		z(N)=d\alpha(M+24)+(n-d)\alpha(M+27)=\alpha\bigl(n(M+27)-3d\bigr).
	\]
	In particular, for $d\le n$,
		\[
			z(N)\ge \alpha n(M+24)>\alpha n(M+20)=\mathrm{OPT}(\widetilde I_n),
		\]
		and since every incumbent value is at most $\mathrm{OPT}(\widetilde I_n)$, no such node can ever be pruned by bound.

	If $d<n$, then block $d+1$ is free.
	By \cref{lem:block_values}(1), the unique optimal LP solution in that block has $y_{d+1,1}=\tfrac34$, so $y_{d+1,1}$ is a candidate branching variable.
	By construction of $\widehat{\Score}$, every fractional $y_{i,1}$ has strictly larger score than every other candidate, so $\hat\pi$ branches on $y_{d+1,1}$ at every node of depth $d$.
	Fixing $y_{d+1,1}$ to $0$ or $1$ yields two LP feasible children, since the block LP value equals $\alpha(M+24)$ in both cases.
	Also, since $d<n$ at least one block is free, its LP optimum has $b=\tfrac12$ and the node is not integral.
	Thus every node of depth $d<n$ is branched and has two feasible children.

	It follows that the tree generated by $\hat\pi$ contains the full binary tree of depth $n$.
	Hence
	\[
		|\Tcal_{\hat\pi}(\widetilde I_n)|\ge \sum_{d=0}^n 2^d=2^{n+1}-1,
	\]
	which proves item~2 and completes the proof.
\qed
\end{proof}

\subsubsection{Proof of Proposition \ref{prop:tiebreak_instability}}
\label{app:tiebreak_instability}

\begin{proof}[of \cref{prop:tiebreak_instability}]
	Fix $n\ge 3$.
	Let $I_n$ be the instance from \cref{def:InK} with parameter $n$.
	Fix any $\kappa\in(0,9]$.
	Define the scoring function
	\[
		\Score(N,j)=\min\set{\Score_{\mathrm{SB}}(N,j),\kappa}.
	\]
	Consider any node $N$ that contains a free block $i$.
	By \cref{lem:block_values}(1), in a free block the unique LP optimum has $b_i=\tfrac12$ and $y_{i,1}=\tfrac34$, so both are fractional and the block LP value equals $M+27$.

	Let $j_y$ and $j_b$ denote the indices of $y_{i,1}$ and $b_i$.
	Branching on $j_y$ fixes $y_{i,1}\in\{0,1\}$.
	By \cref{lem:block_values}(3), both children have block LP value $M+24$.
	Hence the two LP improvements are $3$ and $3$, and $\Score_{\mathrm{SB}}(N,j_y)\ge 9$.
	Branching on $j_b$ fixes $b_i\in\{0,1\}$.
	By \cref{lem:block_values}(2), the child block LP values are $M+20$ and $34$.
	Hence the two LP improvements are $7$ and $M-7$, and $\Score_{\mathrm{SB}}(N,j_b)>9$.
	Since $\kappa\le 9$, we have $\Score(N,j_y)=\Score(N,j_b)=\kappa$.
	Thus, whenever a node contains a free block, the argmax set contains both a $b$ variable and a $y_{i,1}$ variable.

	Let $\pi_{\min}$ be the argmax policy induced by $\Score$ with ties broken by the smallest index.
	Let $\pi_y$ be the argmax policy induced by $\Score$ with the following tie-breaking rule: among the
	maximizers, select the smallest index variable of the form $y_{i,1}$ if such a maximizer exists, and
	otherwise select the smallest index maximizer.

	We first analyze $\pi_{\min}$.
	Since ties are broken by the smallest index and all $b$ variables precede all $y$ variables in the ordering, $\pi_{\min}$ branches on the smallest index fractional $b_i$ whenever a node contains a free block.
	In particular, $\pi_{\min}$ branches on $b_1$ at the root.
	The remainder of the argument is the same as in the proof of \cref{lem:sb_tree_size_InK}: best bound follows the chain $b_1=\cdots=b_n=0$, finds an optimal incumbent at depth $n$, and prunes all remaining open nodes by bound.
	Therefore $|\Tcal_{\pi_{\min}}(I_n)|=2n+1$.

	Next we analyze $\pi_y$.
	We claim that at every node at depth $d<n$ in the tree generated by $\pi_y$, the selected branching variable is $y_{d+1,1}$.
	After $d$ branchings, the variables $y_{1,1},\dots,y_{d,1}$ are fixed, and block $d+1$ is free.
	Therefore $y_{d+1,1}$ is fractional and belongs to the argmax set, so the tie-breaking rule selects it.
	Thus $\pi_y$ branches on $y_{1,1},y_{2,1},\dots,y_{n,1}$ along each root to leaf path until depth $n$.
	As in the proof of \cref{lem:sb_tree_size_InK}, we have $\mathrm{OPT}(I_n)=n(M+20)$.
	For any node at depth $d\le n$ in the tree generated by $\pi_y$, the first $d$ blocks contribute $M+24$ each, and the remaining $n-d$ blocks are free and contribute $M+27$ each.
	Thus the node LP value equals $n(M+27)-3d\ge n(M+24)>\mathrm{OPT}(I_n)$, so no such node can be pruned by bound.
	Moreover, for $d<n$ at least one block is free, so the node is not integral and branching on $y_{d+1,1}$ produces two LP feasible children by \cref{lem:block_values}(3).
	Hence the run generates the full binary tree of depth $n$, and
	\[
		|\Tcal_{\pi_y}(I_n)|\ge 2^{n+1}-1.
	\]
\leavevmode\qed
\end{proof}

\subsubsection{Proof of \cref{thm:k_deviation_linear_gap}}

\begin{proof}[of \cref{thm:k_deviation_linear_gap}]
	Fix $n\in\N$ and $k\in\set{0,1,\dots,n}$, and consider the instance $I_n$ with parameter $M=7n+8$.
	By \cref{lem:sb_tree_size_InK}, we have $|\Tcal_{\pi_{\mathrm{SB}}}(I_n)|=2n+1$ and $\mathrm{OPT}(I_n)=n(M+20)$.
	We construct a policy $\hat\pi_{n,k}$ and lower bound $|\Tcal_{\hat\pi_{n,k}}(I_n)|$.

	Recall that a branch-and-bound node $N$ is associated with the subproblem obtained from the original instance by adding bound constraints on some subset of the integer variables.
	Since $I_n$ is a $0$--$1$ instance, each branching sets one binary variable to $0$ or $1$. For any restriction $N$ of $I_n$ where some subset of binary variables have been fixed to either $0$ or $1$, we define $\mathrm{Fix}(N)$ as the size of this subset; thus, $\mathrm{Fix}(I_n) = 0$ (observe that in any branch-and-bound tree with $I_n$ as the root node, the depth of any node $N$ is precisely $\mathrm{Fix}(N)$ since $I_n$ is a $0$--$1$ instance).
    We define a branching policy $\hat\pi_{n,k}$ as follows. For any restriction $N$ of the original instance $I_n$,
    \[
		\hat\pi_{n,k}(N)=
		\begin{cases}
			y_{\mathrm{Fix}(N)+1,1}, & \mathrm{Fix}(N)<k,\\
			\pi_{\mathrm{SB}}(N), & \mathrm{Fix}(N)\ge k.
		\end{cases}
	\]
    Note that $\hat\pi_{n,k}$ can be interpreted to be a score-based policy as described in \cref{def:score_based_policy}, since it is obtained from any score function that scores the variable $y_{\mathrm{Fix}(N)+1,1}$ higher than all other variables whenever $\mathrm{Fix}(N)<k$, and use the strong branching score if $\mathrm{Fix}(N)\geq k$.

	We first verify item~(1) in the theorem statement in the sense of \cref{def:k_dev_along_sb}.
	Along the strong branching run on $I_n$, the internal nodes are exactly
	\[
		N_d:\quad b_1=\cdots=b_d=0,\ \text{all other variables unfixed},\qquad d=0,1,\dots,n-1.
	\]
	Moreover, $\mathrm{Fix}(N_d)=d$, and by \cref{lem:sb_tree_size_InK} we have $\pi_{\mathrm{SB}}(N_d)=b_{d+1}$.
	For $d<k$, the definition of $\hat\pi_{n,k}$ gives $\hat\pi_{n,k}(N_d)=y_{d+1,1}\neq b_{d+1}$.
	For $d\ge k$, we have $\hat\pi_{n,k}(N_d)=\pi_{\mathrm{SB}}(N_d)$.
	Therefore $\hat\pi_{n,k}$ differs from $\pi_{\mathrm{SB}}$ on exactly the nodes $N_0,\dots,N_{k-1}$.

	We now lower bound the tree size.
	Write $\mathrm{OPT}=\mathrm{OPT}(I_n)$.
	Since the incumbent value is always at most $\mathrm{OPT}$, no node with LP value strictly larger than $\mathrm{OPT}$ can be pruned by bound.
	We claim that every node at depth $d<k$ generated by $\hat\pi_{n,k}$ has LP value strictly larger than $\mathrm{OPT}$ and is not integral.
	Indeed, such a node fixes $(y_{1,1},\dots,y_{d,1})\in\set{0,1}^d$ and leaves all $b$ variables unfixed.
	By block separability and \cref{lem:block_values}(1), \cref{lem:block_values}(3), each of the first $d$ blocks contributes LP value $M+24$, and each remaining block contributes LP value $M+27$.
	Thus
	\begin{align*}
		z(N)
				& = d(M+24)+(n-d)(M+27) \\
				& = n(M+27)-3d \\
				& \ge n(M+27)-3(k-1) \\
				& = n(M+20)+4n+3(n-k+1) \\
				& \ge \mathrm{OPT}+(4n+3) \\
				& > \mathrm{OPT}.
		\end{align*}
		Moreover, block $d+1$ is free, so its unique blockwise LP optimum has $y_{d+1,1}=3/4$ by \cref{lem:block_values}(1).
		In particular, $\hat\pi_{n,k}(N)$ is a valid branching variable and $N$ is not integral.
		Therefore every node at depth $d<k$ must be branched.
		Since branching on $y_{d+1,1}$ yields two LP feasible children by \cref{lem:block_values}(3), the first $k$ levels form a full binary tree.

	For $\sigma\in\set{0,1}^k$, let $N_\sigma$ denote the node at depth $k$ obtained by fixing $y_{i,1}=\sigma_i$ for all $i\in[k]$ and leaving all other variables unfixed.
	The $2^k$ nodes $N_\sigma$ have disjoint feasible regions, so the subtrees rooted at these nodes are disjoint.
		Fix $\sigma\in\set{0,1}^k$.
		At $N_\sigma$, we have
		\begin{align*}
			z(N_\sigma)
			 & = k(M+24)+(n-k)(M+27) \\
			 & = n(M+27)-3k \\
			 & = \mathrm{OPT}+(7n-3k) \\
			 & \ge \mathrm{OPT}+4n.
		\end{align*}

		From depth $k$ onward, $\hat\pi_{n,k}$ follows $\pi_{\mathrm{SB}}$.
		We define a path in the subtree rooted at $N_\sigma$.
		Set $N^{(0)}=N_\sigma$.
		For $t\ge 0$, if $N^{(t)}$ is an internal node, let $i$ be the block index of the branching variable selected by $\pi_{\mathrm{SB}}$ at $N^{(t)}$, and define $N^{(t+1)}$ as follows.
		If the branching variable is $b_i$, then $N^{(t+1)}$ is the child with $b_i=0$.
		If the branching variable is $p_i$, then $N^{(t+1)}$ is the child with $p_i=1$.
		If the branching variable is a $y$ variable in block $i$, then $N^{(t+1)}$ is either child.

		We claim that for every $t\ge 0$ for which $N^{(t)}$ is defined,
		\[
			z(N^{(t)})\ge z(N_\sigma)-27t.
		\]
		To see this, note that each branching fixes a variable in a single block, so only that block can change its contribution to the node LP value.
		The block LP value is at most $M+27$, since a free block attains LP value $M+27$ by \cref{lem:block_values}(1) and additional fixings can only tighten the feasible region.
		Let $i$ be the block index selected at $N^{(t)}$.
		By the definition of the path, node $N^{(t+1)}$ does not impose $b_i=1$ and does not impose $p_i=0$.
		Therefore $N^{(t+1)}$ contains a feasible solution in which $b_i=0$ and $p_i=1$, and all other unfixed variables in block $i$ are set to $0$.
	Since all objective coefficients are nonnegative, this solution has block value at least $M$.
	Therefore the node LP value drops by at most $(M+27)-M=27$ in one step, which proves the claim.

				Let $T=\lfloor n/7\rfloor+1$.
				For every $t\in\set{0,1,\dots,T-1}$, we have $t\le T-1=\lfloor n/7\rfloor\le n/7$, and hence
				\begin{align*}
					z(N^{(t)})
					 &\ge \mathrm{OPT}+4n-27t \\
					 &\ge \mathrm{OPT}+\left(4-\tfrac{27}{7}\right)n \\
					 &= \mathrm{OPT}+\tfrac{1}{7}n \\
					 &> \mathrm{OPT},
				\end{align*}
				so $N^{(t)}$ cannot be fathomed by bound.
	Also, since $t\le T-1=\lfloor n/7\rfloor<n$, along the path from $N_\sigma$ to $N^{(t)}$ we branch on variables from at most $t$ distinct blocks, so there exists a block that is not branched on after reaching $N_\sigma$.
	In an untouched block, every optimal LP solution has $b_i=1/2$ by \cref{lem:block_values}(1) and \cref{lem:block_values}(3).
	This implies that $N^{(t)}$ is not integral.
	Consequently, each node $N^{(t)}$ for $t\le T-1$ is an internal node of the subtree rooted at $N_\sigma$.
	Since each internal node in a branch-and-bound tree has exactly two children, a tree with $T$ internal nodes has at least $2T+1$ total nodes.
	Since $\lfloor n/7\rfloor\ge n/7-1$, we have $2T+1=2\lfloor n/7\rfloor+3\ge 2n/7$.
	Thus each subtree rooted at $N_\sigma$ contains at least $2n/7$ nodes.
	Summing over the $2^k$ disjoint subtrees yields $|\Tcal_{\hat\pi_{n,k}}(I_n)|\ge 2^k\cdot 2n/7$.
	This completes the proof.
\qed
\end{proof}

\begin{figure}[htbp]
	\centering
	\begin{subfigure}[t]{\linewidth}
		\centering
		\begin{tikzpicture}[
			scale=1.00, transform shape,
			bbnode/.style={rectangle,draw,rounded corners=2pt,minimum width=18mm,minimum height=8mm,inner sep=1pt,font=\scriptsize,align=center,scale=1.095},
			edge/.style={-latex,semithick},
			lab/.style={font=\scriptsize,inner sep=1pt,fill=white,fill opacity=0.9,text opacity=1}
			]
		\node[bbnode] (root) at (0,0) {$z_0=n(M{+}27)$};

		\node[bbnode] (a0) at (-3.2,-1.6) {$z_0-3$};
		\node[bbnode] (a1) at (3.2,-1.6) {$z_0-3$};
		\draw[edge] (root) -- node[lab,pos=0.5,above left=1pt] {$y_{1,1}{=}0$} (a0);
		\draw[edge] (root) -- node[lab,pos=0.5,above right=1pt] {$y_{1,1}{=}1$} (a1);

		\node[bbnode] (b00) at (-4.5,-3.2) {$z_0-6$};
		\node[bbnode] (b01) at (-1.9,-3.2) {$z_0-6$};
		\draw[edge] (a0) -- node[lab,pos=0.75,above left=1pt] {$y_{2,1}{=}0$} (b00);
		\draw[edge] (a0) -- node[lab,pos=0.75,above right=1pt] {$y_{2,1}{=}1$} (b01);

		\node[bbnode] (b10) at (1.9,-3.2) {$z_0-6$};
			\node[bbnode] (b11) at (4.5,-3.2) {$z_0-6$};
		\draw[edge] (a1) -- node[lab,pos=0.75,above left=1pt] {$y_{2,1}{=}0$} (b10);
		\draw[edge] (a1) -- node[lab,pos=0.75,above right=1pt] {$y_{2,1}{=}1$} (b11);

		\node[font=\small] (dots00) at (-4.5,-4.0) {$\vdots$};
		\node[font=\small] (dots01) at (-1.9,-4.0) {$\vdots$};
		\node[font=\small] (dots10) at (1.9,-4.0) {$\vdots$};
		\node[font=\small] (dots11) at (4.5,-4.0) {$\vdots$};

		\node[font=\scriptsize,align=center] at (0,-5.0) {$\text{continue similarly to depth }k\Rightarrow 2^k\text{ nodes }N_\sigma$};
			\end{tikzpicture}
		\caption{Branching on $y_{1,1},\dots,y_{k,1}$ (schematic, with only the first two levels shown).}
	\end{subfigure}

\vspace{0.75em}

\begin{subfigure}[t]{\linewidth}
		\centering
		\begin{tikzpicture}[
				scale=1.0, transform shape,
				bbnode/.style={rectangle,draw,rounded corners=2pt,minimum width=18mm,minimum height=8mm,inner sep=1pt,font=\scriptsize,align=center},
				edge/.style={-latex,semithick},
				path/.style={edge,thick},
				lab/.style={font=\scriptsize,inner sep=1pt,fill=white,fill opacity=0.9,text opacity=1}
		]
		\node[bbnode] (r) at (0,0) {$N_\sigma$};

		\node[bbnode] (p1) at (-2.0,-1.2) {};
		\node[bbnode] (c1) at (2.0,-1.2) {};
		\draw[edge] (r) -- (p1);
		\draw[path] (r) -- (c1);

		\node[bbnode] (p2) at (0.0,-2.4) {};
		\node[bbnode] (c2) at (4.0,-2.4) {};
		\draw[edge] (c1) -- (p2);
		\draw[path] (c1) -- (c2);

	\node (dots) at (4.8,-3.2) {$\cdots$};
	\node[bbnode] (end) at (5.6,-4.0) {};
	\draw[thick,dashed] (c2) -- (dots);
	\draw[thick,dashed] (dots) -- (end);

	\node[font=\scriptsize,align=center] at (2.8,-4.8) {$\Omega(n)$ further branchings};
\end{tikzpicture}
\caption{Continuing with $\pi_{\mathrm{SB}}$ below $N_\sigma$.}
\end{subfigure}
\caption{Schematic of the tree under $\hat\pi_{n,k}$. The first $k$ levels branch on $y_{1,1},\dots,y_{k,1}$. Below each of the $2^k$ nodes $N_\sigma$, the policy follows $\pi_{\mathrm{SB}}$ and generates $\Omega(n)$ additional nodes. In particular, $|\Tcal_{\hat\pi_{n,k}}(I_n)|\ge 2^k\cdot 2n/7$.}
\label{fig:hat_tree_schematic}
\end{figure}
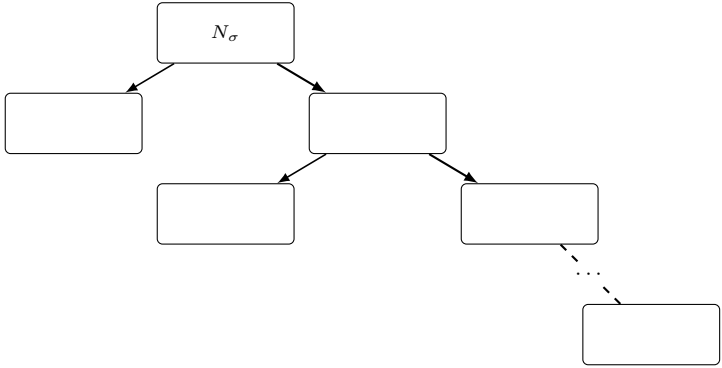

\section{Conclusion and Discussion}
\label{sec:conclusion}

We study the inherent limitations in the use of local characteristics of a B\&C tree, such as strong branching and LP objective improvement, to guide branching and cut selection decisions. We identify two main pitfalls. First, such local ``scores" may result in exponentially suboptimal decisions, even if they are computed exactly (already observed for strong branching in earlier work by~\cite{dey2024theoretical}). Second, if one approximates these ``scores" using methods based on learning or other approaches for computational efficiency, then even arbitrarily small, but nonzero, approximation errors can lead to exponential blowups in tree sizes, even when the original ``score" produces small B\&C trees.


For cut selection, we showed that LP bound improvement can be a bad indicator of overall tree size by exhibiting instances where selecting cuts by LP bound improvement yields an exponentially larger strong branching tree than selecting cuts by a proxy score, and we proved that an arbitrarily small perturbation of the right-hand sides in a root cut set can change the optimal B\&B tree size from $1$ to $2^{\Omega(n)}$ while barely affecting the root LP improvement.
For branching, we showed that arbitrarily small uniform deviations from strong branching scores can produce exponentially larger trees, that identical scores with different tie-breaking can also yield exponential gaps, and that $k$ deviations from strong branching can induce a $2^{\Omega(k)}$ increase in tree size.

Our lower bounds are worst case statements and do not suggest that strong branching, LP bound improvement, or their learned approximations should be discarded in practice.
Instead, they highlight two aspects that are invisible to standard local evaluation, namely the quality of the expert itself and the stability of the induced search under small perturbations.
In empirical work, this suggests reporting not only average tree sizes but also how performance changes when predicted scores are perturbed, ties are broken differently, or a small fraction of branching decisions is forced to follow an alternative rule.
Such stress tests can help assess whether a learned policy is fragile on its target distribution.

These results complement existing generalization analyses by isolating an approximation barrier in data-driven methods based on learning: local accuracy on expert signals does not guarantee good global performance.
In other words, imitating an expert is a natural proxy for learning a good policy, but it can fail to control tree size in the worst case.
This gap motivates end-to-end training and learning that directly optimizes global objectives such as tree size.
Reinforcement learning with a reward based on tree size is one such approach; it avoids the mismatch between training signal and evaluation metric.
For imitation learning pipelines that remain attractive due to their computational or sample efficiency, losses that account for score margins and stress tests (e.g., perturbing predicted scores, randomizing tie-breaking, or flipping decisions along the trajectory) can help detect and mitigate fragility.
Identifying conditions under which local supervision provably controls tree size, and characterizing instance classes where such conditions hold, remain open directions.

\renewcommand{\ackname}{Acknowledgments and Disclosure of Funding}
\begin{acknowledgements}
Both authors gratefully acknowledge support from the Air Force Office of Scientific Research (AFOSR) grant FA9550-25-1-0038. The first author also received support from a MINDS Fellowship awarded by the Mathematical Institute for Data Science (MINDS) at Johns Hopkins University.
\end{acknowledgements}
\nocite{}
\clearpage
\bibliography{reference}
\bibliographystyle{plainnat}

\end{document}